\newenvironment{myabstract}{\par\noindent
{\bf Abstract . } \small }
{\par\vskip8pt minus3pt\rm}
\newcounter{item}[section]
\newcounter{kirshr}
\newcounter{kirsha}
\newcounter{kirshb}
\newenvironment{enumarab}{\setcounter{kirshb}{1}
\begin{list}{(\arabic{kirshb})}{\usecounter{kirshb}} }{\end{list}}
\newtheorem{theorem}{Theorem}[section]
\newtheorem{lemma}[theorem]{Lemma}
\newtheorem{corollary}[theorem]{Corollary}
\newenvironment{demo}[1]{\noindent{\bf #1.}\upshape\mdseries}
{\nopagebreak{\hfill\rule{2mm}{2mm}\nopagebreak}\par\normalfont}
\theoremstyle{definition}
\newtheorem{example}[theorem]{Example}
\newtheorem{definition}[theorem]{Definition}
\def\R{\mathbb{R}}
\def\C{{\mathfrak{C}}}
\def\Fm{{\mathfrak{Fm}}}
\def\At{{\bf At}}
\def\Nr{{\mathfrak{Nr}}}
\def\Sg{{\mathfrak{Sg}}}
\def\Fm{{\mathfrak{Fm}}}
\def\A{{\mathfrak{A}}}
\def\B{{\mathfrak{B}}}
\def\C{{\mathfrak{C}}}
\def\D{{\mathfrak{D}}}
\def\M{{\mathfrak{M}}}
\def\N{{\mathfrak{N}}}
\def\Sn{{\mathfrak{Sn}}}
\def\CA{{\bf CA}}
\def\QA{{\bf QA}}
\def\QEA{{\bf QEA}}
\def\Df{{\bf Df}}
\def\Lf{{\bf Lf}}
\def\PA{{\bf PA}}
\def\PEA{{\bf PEA}}
\def\K{{\bf K}}
\def\K{{\bf K}}
\def\RCA{{\bf RCA}}
\def\Rd{{\ Rd}}
\def\(R)RA{{\bf (R)RA}}
\def\RA{{\bf RA}}
\def\R{\mathbb{R}}
\def\Sc{{\bf Sc}}
\def\Id{{\bf Id}}
\def\c #1{{\cal #1}}
 \def\CA{{\sf CA}}
\def\B{{\sf B}}
\def\g{{\sf g}}
\def\b{{\sf b}}
\def\r{{\sf r}}
\def\K{{\sf K}}
\def\tp{{\sf tp}}
 \def\Cm{{\mathfrak{Cm}}}
\def\Nr{{\mathfrak{Nr}}}
\def\restr #1{{\restriction_{#1}}}
\def\cyl#1{{\sf c}_{#1}}
\def\diag#1#2{{\sf d}_{#1#2}}
\def\sub#1#2{{\sf s}^{#1}_{#2}}
\def\R{\sf R}
\def\Ra{{\mathfrak{Ra}}}
\def\Ca{{\mathfrak{Ca}}}
\def\set#1{\{#1\} }
\def\Ra{{\mathfrak{Ra}}}
\def\Nr{{\mathfrak{Nr}}}
\def\Tm{{\mathfrak{Tm}}}
\def\A{{\mathfrak{A}}}
\def\B{{\mathfrak{B}}}
\def\C{{\mathfrak{C}}}
\def\D{{\mathfrak{D}}}
\def\A{{\mathfrak{A}}}
\def\B{{\mathfrak{B}}}
\def\C{{\mathfrak{C}}}
\def\D{{\mathfrak{D}}}
\def\U{{\mathfrak{U}}}
\def\Bb{{\mathfrak{Bb}}}
\def\L{{\mathfrak{L}}}
\def\Rd{{\mathfrak{Rd}}}
\def\Bb{{\mathfrak{Bb}}}
\def\At{{\mathfrak{At}}}
\def\L{{\mathfrak{L}}}
\def\CA{{\bf CA}}
\def\RA{{\bf RA}}
\def\RCA{{\bf RCA}}
\def\F{{\mathfrak{F}}}
\def\At{{\sf{At}}}
\def\N{\mathbb{N}}
\def\R{\mathfrak{R}}
\def\Cs{{\sf Cs}}
\def\sub#1#2{{\sf s}^{#1}_{#2}}
\def\cyl#1{{\sf c}_{#1}}
\def\diag#1#2{{\sf d}_{#1#2}}
\def\c #1{{\cal #1}}
\def\pa{$\forall$}
\def\pe{$\exists$}
\def\ef{Ehren\-feucht--Fra\"\i ss\'e}
\def\nodes{{\sf nodes}}
\def\restr #1{{\restriction_{#1}}}
\def\Ra{{\mathfrak{Ra}}}
\def\Nr{{\mathfrak{Nr}}}
\def\Z{{\cal Z}}
\def\CA{{\bf CA}}
\def\RCA{{\bf RCA}}
\def\c#1{{\mathcal #1}}
\def\set#1{ \{#1\}}
\def\Ca{{\mathfrak Ca}}
\def\b#1{{\bar{ #1}}}
\def\pe{$\exists$}
\def\pa{$\forall$}
\def\Cm{{\mathfrak Cm}}
\def\Sg{{\mathfrak Sg}}
\def\Rl{{\mathfrak Rl}}
\def\N{{\cal N}}
\def\ls { L\"owenheim--Skolem}
\def\At{{\sf At}}
\def\Uf{{\sf Uf}}
\def\rng{{\sf rng}}
\def\dom{{\sf dom}}
\def\Cm{{\sf Cm}}
\def\g{{\sf g}}
\def\r{{\sf r}}
\def\tp{{\sf tp}}
\def\cyl#1{{\sf c}_{#1}}
\def\sub#1#2{{\sf s}^{#1}_{#2}}
\def\diag#1#2{{\sf d}_{#1#2}}
\def\ws{winning strategy}
\def\ef{Ehren\-feucht--Fra\"\i ss\'e}
 \def\CA{{\sf CA}}
\def\Cs{{\sf Cs}}
\def\RCA{{\sf RCA}}
\def\RA{{\sf RA}}
\def\PA{{\sf PA}}
\def\PEA{\sf PEA}
\def\QEA{{\sf QEA}}
\def\g{{\sf g}}
\def\r{{\sf r}}
\def\Z{{\mathbb{Z}}}
\def\N{{\mathbb{N}}}
\def\CPEA{\sf CPEA}
\def\U{{\mathfrak{U}}}
\def\Gp{{\sf Gp}}
\def\c{{\sf c}}
\def\s{{\sf s}}
\def\Id{{\sf Id}}
\def\Sc{{\sf Sc}}
\def\Df{{\sf Df}}
\def\Lf{{\sf Lf}}
\def\K{{\sf K}}
\def\nodes{{\sf nodes}}
\def\Sc{{\sf Sc}}
\def\Df{{\sf Df}}
\def\PA{{\sf PA}}
\def\Id{{\sf Id}}
\def\QEA{{\sf QEA}}
\def\s{{\sf s}}
\def\CA{{\sf CA}}
\def\K{{\sf K}}
\def\QA{{\sf QA}}
\def\RCA{{\sf RCA}}
\def\A{{\mathfrak{A}}}
\def\Cs{{\sf Cs}}
\def\V{{\sf V}}
\def\cyl#1{{\sf c}_{#1}}
\def\sub#1#2{{\sf s}^{#1}_{#2}}
\def\diag#1#2{{\sf d}_{#1#2}}
\def\Mo{{\sf Mo}}
\def\swap#1#2{{\sf s}_{[#1, #2]}}
\def\la#1{\langle#1\rangle}
\def\Nr{{\sf Nr}}
\def\de{Dedekind-MacNeille}
\def\Cm{{\mathfrak{Cm}}}
\def\M{{\sf M}}
\def\T{{\sf T}}
\def\CRCA{{\sf CRCA}}
\def\PEA{{\sf PEA}}
\def\RPA{{\sf RPA}}
\def\Nrr{{\mathfrak{Nr}}}
\title{Completely representable neat reducts 
}
\author{Tarek Sayed Ahmed\\
Department of Mathematics, Faculty of Science,\\
Cairo University, Giza, Egypt.
 }
\date{}
\begin{document}
\maketitle

\begin{myabstract} For an ordinal $\alpha$, $\sf PEA_{\alpha}$ denotes the class of polyadic equality algebras of dimension $\alpha$. 
We show that for several classes of algebras that are reducts of $\PEA_{\omega}$ whose signature contains all substitutions and finite cylindrifiers, if $\B$ is  in such a class, and $\B$ is atomic, 
then for all $n<\omega$,  $\Nr_n\B$ is completely representable as a $\PEA_n$. Conversely, we show that for any $2<n<\omega$, and any variety $\sf V$, 
between diagonal free cylindric algebras and quasipolyadic equality algebras of dimension $n$, 
the class of completely representable algebras in $\sf V$ is not elementary.
\end{myabstract}

\section{Introduction} Relation algebras $\sf RA$s and cylindric algebras of dimension $\alpha$, $\alpha$ any ordinal $\CA_{\alpha}$ are introduced by Tarski. Both are varieties
that are axiomatized by a relatively simple schema of equations. 
Relation algebras are abstractions of algebras whose universe consists of binary relations, with top element an equivalence relation, and Boolean operations of union and complementation  
and extra operations of composition and forming converses. Such algebras are called representable relation algebras of dimension $\alpha$, in symbols $\sf RRA$. 
In both cases equality is represented by the identity relation. The last class, when the top elements are disjoint unions of cartesian squares of dimension $\alpha$ 
is called the class of representable polyadic algebras  algebras of dimension $\alpha$, and is denoted by $\sf RCA_{\alpha}$.
Unless otherwise explicitly indicated, let $2<n<\omega$. The classes $\sf RRA$ and $\RCA_n$ are not finitely axiomatizable,  
In particular $\sf RRA\subsetneq \sf RA$ and similarly $\RCA_n\subsetneq \CA_n$. 
Polyadic algebras were introduced by Halmos \cite{Halmos} to provide an algebraic reflection
of the study of first order logic without equality. Later the algebras were enriched by 
diagonal elements to permit the discussion of equality. 
That the notion is indeed an adequate reflection of first order logic was 
demonstrated by Halmos' representation theorem for locally finite polyadic algebras 
(with and without equality). Daigneault and Monk  
proved a strong extension of Halmos' theorem, namely that,  
every polyadic algebra of infinite dimension (without equality) is representable \cite{DM}. 

In the realm of representable algebras, there are several types of representations. Ordinary representations are just isomorphisms from Boolean algebras
with operators to a more concrete structure (having the same signature) 
whose elements are sets endowed with set-theoretic operations like intersection and complementation.
Complete representations, on the other hand, are representations that preserve arbitrary conjunctions whenever defined.
More generally consider the following question: Given an  algebra  and a set of meets, is there a representation  that carries 
this set of meets to set theoretic intersections? A complete representation would thus be one that preseves {\it all existing} meets (finite of course and infinite). 
Here we are assuming that our semantics is specified by set algebras, with the concrete Boolean operation of intersection among
its basic operations. 
When the algebra in question is countable, and we have only countably many meets; 
this is an algebraic version of an  omitting types theorem; the representation omits the given set meets or non-principal types. 
When the algebra in question is atomic, then a representation omitting the non-principal type consisting of co-atoms,  turns out to be a complete representation.
This follows from the following result due to Hirsch and Hodkinson: A Boolean algebra $\A$ has a complete representation $f:\A\to \langle\wp(X), \cup, \cap, \sim, \emptyset, X\rangle$ 
($f$ is a 1-1 homomorphism and $X$ a set) $\iff$  
$\A$ atomic and $\bigcup_{x\in \At\A} f(x)=X$, where $\At\A$ is the set of atoms of $\A$. 

On the face of it, the notion of complete representations seems to be strikingly a second order one. This intuition is confirmed in \cite{HH} where it is proved
that the classes of completely representable cylindric algebras of dimension at least three and that of relation algebras are  not elementary. 
These results were proved by Hirsch and Hodkinson using so-called rainbow algebras \cite{HH}; in this paper we present  entirely
different proofs for all such results and some more closely related ones using so called Monk-like algebras. Our proof depends 
essentially on some form of an infinite combinatorial version of Ramsey's Theorem. 
But running to such conclusions--concerning (non-)first order definablity-- can be reckless and far too hasty; 
for in other non-trivial cases the notion of complete representations turns {\it not to be} a genuinely second order one; it is 
definable in first order logic. 
The class of completely representable
Boolean algebras is elementary; it simply coincides with the atomic ones.  
A far less trivial example is the class of completely representable infinite dimensional polyadic algebras; 
it coincides with the class of atomic, completely additive algebras. It is not hard to show that, like atomicity, complete additivity can indeed be defined in first order logic \cite{au}.
This is not true for the class $\PEA_{\alpha}$ of {\it polyadic algebras with equality of dimension $\alpha$.} 
However, we will show that if $\A\in \PEA_{\alpha}$ is atomic, then {\it all of its  finite dimensional neat reducts are not only representable, but completely 
representable.} So from one atomic algebra one obtains 
a plethora of completely representable ones,  at least one for each finite dimension .

For some odd reason, historically the underlying intuition of the notion of complete representability 
progressed  in a different direction. The correlation of (the first order property of) atomicity to complete representations has caused a lot of confusion in the past.
It was mistakenly thought for a while, among algebraic logicians,  that atomic representable relation and cylindric algebras
are completely representable, an error attributed to Lyndon and now referred to as Lyndon's error. But in retrospect, one can safely say by gathering and scrutinizing recent results that 
the first order definability of the the notion of complete representations heavily depends on the algebras required to be completely represented. 
In other words, the (possibly slippery) 
notion of `complete representations'  
needs a context to be fixed one way or another, and it is surely unwise to declare a verdict without a careful and thorough investigation of 
the specific situation at hand. Let $\sf CRRA$ denote the class of completely representable $\sf RA$S, and $\CRCA_n$ denote the class of completely reprsentable $\CA_n$s. 
It is shown that the classes   $\sf CRRA$ and $\CRCA_n$ are not elmentary, reproving a result of Hirsch and Hodkinson, 
and we fo further by showing that $\sf CRRA$  is not even closed 
under $\equiv_{\infty, \omega}$.

In \cite{IGPL} it is proved that for any pair of infinite ordinals $\alpha<\beta$, the class $\Nr_{\alpha}\CA_{\beta}$ is not elementary. 
A different model theoretic proof for finite $\alpha$ is given in \cite[Theorem 5.4.1]{Sayedneat}.
This result is extended to many cylindric like algebras like Halmos'  polyadic algbras with and without eqaulity, and Pinter's substitution algebras in \cite{Fm, note}. The class $\CRCA_n$ is 
proved not be elementary by Hirsch and Hodkinson in \cite{HH}. Neat embeddings and complete representations are linked in \cite[Theorem 5.3.6]{Sayed} where it is shown that 
$\CRCA_n$ coincides with the class $\bold S_c\Nr_n\CA_{\omega}$ on atomic 
algebra having countably many atoms. In \cite{jsl} it is proved that this charactarization does not generalize to atomic algebras having uncountably many atoms.  
 Such counterexamples are used to violate metalogical theorems such as \cite[Theorem 3.2.9-10 ]{Sayed} involving the celebrated Orey-Henkn omitting types theorem for finite varible fragment 
of $L_{\omega,\omega}$.

\section{Preliminaries}

We follow the notation of \cite{1} which is in conformity with the notation in the monograph 
\cite{HMT2}.  In particular, for any pair of ordinal $\alpha<\beta$, $\CA_{\alpha}$ stands for the class of cylindric algebras of dimension
$\alpha$, $\RCA_{\alpha}$ denotes the class of representable $\CA_{\alpha}$s and $\Nr_{\alpha}\CA_{\beta}(\subseteq \CA_{\alpha})$ 
denotes the class of $\alpha$--neat reducts of $\CA_{\beta}$s. The last class is studied extensively in 
the chapter  \cite{Sayedneat} of \cite{1} as a key notion in the representation theory of cylindric algebras. 
The notion of {\it neat reducts} and the related one of {\it neat embeddings} are both important in algebraic logic for the
simple reason that both notions are very much tied
to the notion of representability, via the so--called neat embedding theorem of Henkin's which says that (for any ordinal 
$\alpha$),  we have $\sf RCA_{\alpha}=\bold S\Nr_{\alpha}\CA_{\alpha+\omega}$, where $\bold S$ stands for the operation of forming subalgebras.
\begin{definition} 
Assume that $\alpha<\beta$ are ordinals and that 
$\B\in \CA_{\beta}$. Then the {\it $\alpha$--neat reduct} of $\B$, in symbols
$\mathfrak{Nr}_{\alpha}\B$, is the
algebra obtained from $\B$, by discarding
cylindrifiers and diagonal elements whose indices are in $\beta\setminus \alpha$, and restricting the universe to
the set $Nr_{\alpha}B=\{x\in \B: \{i\in \beta: {\sf c}_ix\neq x\}\subseteq \alpha\}.$
\end{definition}

It is straightforward to check that $\mathfrak{Nr}_{\alpha}\B\in \CA_{\alpha}$. 
Let $\alpha<\beta$ be ordinals. If $\A\in \CA_\alpha$ and $\A\subseteq \mathfrak{Nr}_\alpha\B$, with $\B\in \CA_\beta$, then we say that $\A$ {\it neatly embeds} in $\B$, and 
that $\B$ is a {\it $\beta$--dilation of $\A$}, or simply a {\it dilation} of $\A$ if $\beta$ is clear 
from context. For $\bold K\subseteq \CA_{\beta}$, we write $\Nr_{\alpha}\bold K$ for the class $\{\mathfrak{Nr}_{\alpha}\B: \B\in \bold K\}.$
  
Fix $2<n<\omega$.  
Following \cite{HMT2}, ${\sf Cs}_n$ denotes the class of cylindric set algebras of dimension $n$, and ${\sf Gs}_n$ 
denotes the class of generalized set algebra of dimension $n$; $\C\in {\sf Gs}_n$, if $\C$ has top element
$V$ a disjoint union of cartesian squares,  that is $V=\bigcup_{i\in I}{}^nU_i$, $I$ is a non-empty indexing set, $U_i\neq \emptyset$  
and  $U_i\cap U_j=\emptyset$  for all $i\neq j$. The operations of $\C$ are defined like in cylindric set algebras of dimension $n$ 
relativized to $V$. 
${\sf CRCA_n}$ denotes the class of completely represenatble $\CA_n$s.
\begin{definition} An algebra $\A\in {\sf CRCA}_n$ $\iff$ there exists $\C\in {\sf Gs}_n$, and an isomorphism $f:\A\to \C$ such that for all $X\subseteq \A$, 
$f(\sum X)=\bigcup_{x\in X}f(x)$, whenever $\sum X$ exists in $\A$. In this case, we say that $\A$ is completely representable via $f$.
\end{definition}
It is known tht $\A$ is completely representable via $f:\A\to \C$, where $\C\in {\sf Gs}_n$ has top element $V$ say 
$\iff$ $\A$ is atomic and $f$ is {\it atomic} in the sense that 
$f(\sum \At\A)=\bigcup_{x\in \At\A}f(x)=V$ \cite{HH}.
  $\bold S_c$ denotes the operation of forming {\it complete} subalgebras. 
The next lemma tells us that the notions of atomicity and complete representation of an algebra 
are inherited by complete (hence dense) sublgebras. 

\begin{lemma}\label{join} Let $n<\omega$, and $\D$ be a Boolean algebra.
Assume that $\A\subseteq_c \D$. If $\D$ is atomic, then $\A$ is atomic
\cite[Lemma 2.16]{HHbook}.  If $\D\in \CA_n$ is completely representable, then so is $\A$.
\end{lemma}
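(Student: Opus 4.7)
The plan is to leverage the complete representation of $\D$ directly by restricting it to $\A$; the work then reduces to checking that this restriction is still complete and atomic in the sense of the Hirsch--Hodkinson criterion stated just before the lemma. The first clause of the lemma is already cited from \cite[Lemma 2.16]{HHbook}, so I would invoke it to know that $\A$ is atomic, and concentrate on the second clause.

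Concretely, fix an atomic complete representation $g:\D\to \C$ with $\C\in {\sf Gs}_n$ of top element $V$, so that $\bigcup_{d\in \At\D}g(d)=V$. Let $f=g\restr{A}$. Since $g$ is an embedding of Boolean algebras with operators and $\A$ is a subalgebra of $\D$ in the full $\CA_n$ signature, $f$ is an embedding. The image $f(\A)$ is a subalgebra of $\C$ with top element $f(1^{\A})=g(1^{\D})=V$, hence belongs to ${\sf Gs}_n$. To see that $f$ is complete, take any $X\subseteq \A$ for which $\sum^{\A}X$ exists; the hypothesis $\A\subseteq_c\D$ gives $\sum^{\D}X=\sum^{\A}X$, and then completeness of $g$ yields $f(\sum^{\A}X)=g(\sum^{\D}X)=\bigcup_{x\in X}g(x)=\bigcup_{x\in X}f(x)$.

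The remaining obstacle, and the only non-formal step, is to verify that $f$ is an atomic representation, i.e.\ $\bigcup_{a\in \At\A}f(a)=V$. The idea is to show that every atom $d\in \At\D$ lies below some atom $a\in \At\A$: since $\A$ is atomic (by the first clause) we have $\sum^{\A}\At\A=1^{\A}=1^{\D}$, and by $\A\subseteq_c\D$ this supremum is preserved in $\D$, so $1^{\D}=\sum^{\D}\At\A$; the atom $d$ cannot be disjoint from every element of $\At\A$, and being an atom it must be below some $a\in \At\A$. Applying $g$ gives $g(d)\subseteq g(a)=f(a)$, and summing over $d\in \At\D$ produces $V=\bigcup_{d\in \At\D}g(d)\subseteq \bigcup_{a\in \At\A}f(a)\subseteq V$. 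This equality, combined with the criterion of \cite{HH} recalled just before the lemma, shows that $f$ is a complete representation of $\A$, completing the proof.
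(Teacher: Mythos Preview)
Your proof is correct and follows essentially the same approach as the paper: restrict the complete representation of $\D$ to $\A$ and use $\A\subseteq_c\D$ to transfer suprema. Two minor remarks. First, the paper actually writes out a short direct proof of the first clause (via a principal-ultrafilter argument showing that for each non-zero $a\in\A$ the filter $\{x\in\A:x\geq d\}$ generated by an atom $d\leq a$ of $\D$ is a principal ultrafilter of $\A$), rather than only citing \cite{HHbook}. Second, your ``remaining obstacle'' paragraph is redundant: once you have established $f(\sum^{\A}X)=\bigcup_{x\in X}f(x)$ for every $X$ whose sum exists in $\A$, applying this with $X=\At\A$ immediately yields $\bigcup_{a\in\At\A}f(a)=V$, so there is nothing left to verify. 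The paper's version of the second clause is correspondingly shorter, checking only the case $\sum^{\A}X=1$.
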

\begin{proof}
Let everything be as in the hypothesis of the first part. We show that $\A$ is atomic. Let $a\in A$ be non--zero. Then since $\D$ is atomic, 
there exists an atom $d\in D$, such that $d\leq a$. 
Let $F=\{x\in A: x\geq d\}$. Then $F$ is an ultrafilter of $\A$. 
It is clear that $F$ is a filter. To prove maximality, assume that $c\in A$ and $c\notin F$, then $-c\cdot d\neq 0$, so $0\neq -c\cdot d\leq d$, hence $-c\cdot d=d$, 
because $d$ is an atom in $\B$, thus $d\leq -c$, and we get by definition that $-c\in F$.  We have shown that $F$ is an ultrafilter. 
We now show  that $F$ is a principal ultrafilter in $\A$, 
that is, it is generated by an atom. Assume for contradiction that it is not, so that $\prod^{\A} F$ exists, because $F$ is an ultrafilter 
and $\prod ^{\A}F=0$, because it is non--principal. 
But $\A\subseteq_c \D$, so we obtain $\prod^{\A}F=\prod^{\D}F=0$. This contradicts that $0<d\leq x$ for all $x\in F$.
Thus $\prod^{\A}F=a'$, $a'$ is an atom in $\A$, $a'\in F$ 
and $a'\leq a$, because $a\in F$.  We have proved the first required.
Let  $\A\subseteq_c \D$ and 
assume that $\D$ is completely representable.  We will show that $\A$ is completely representable. 
Let $f:\D\to \wp(V)$ be a complete representation of
$\D$. 
We claim that $g=f\upharpoonright \A$ is a complete representation of $\A$. 
Let $X\subseteq \A$ be such that $\sum^{\A}X=1$. 
Then by $\A\subseteq_c \D$, we have  $\sum ^{\D}X=1$. Furthermore, for all $x\in X(\subseteq \A)$ we have $f(x)=g(x)$, so that 
$\bigcup_{x\in X}g(x)=\bigcup_{x\in X} f(x)=V$, since $f$ is a complete representation, 
and we are done. 
\end{proof}
Though the class $\bold S_c\Nr_n\CA_{\omega}$ and the class $\sf CRRA$ coincide on algebras having countably many atoms,
in \cite{bsl} it is shown that  
the condition of countability cannot be omitted: There is an atomic $\A\in {\sf Nr}_n\CA_{\omega}$ with uncountably many atoms such that $\A$ is not completely representable 
But the $\C\in \CA_{\omega}$ for which $\A=\Nr_n\C$ is atomless. 

In what follows we adress complete representability of a given algebra 
in connection to the existence of an $\omega$--dilation of this algebra  that is atomic. We shall deal with many classes of cylindric--like algebras for which the neat reduct 
operator can be defined. In particular, for such classes, and regardless of atomicity, we can (and will) 
talk about an $\omega$--dilation of a given algebra.
For an ordinal $\alpha$, let $\PA_{\alpha}(\PEA_{\alpha})$ denote the class of $\alpha$--dimensional polyadic (equality) algebas as defined in \cite[Definition 5.4.1]{HMT2}.

\begin{definition} Let $\alpha$ be an ordinal. By a {\it polyadic algebra} of dimension $\alpha$,  
or a $\PA_{\alpha}$ for short,
we understand an algebra of the form
$$\A=\langle A,+,\cdot ,-,0,1,{\sf c}_{(\Gamma)},{\sf s}_{\tau} \rangle_{\Gamma\subseteq \alpha ,\tau\in {}^{\alpha}\alpha}$$
where ${\sf c}_{(\Gamma)}$ ($\Gamma\subseteq  \alpha$) and ${\sf s}_{\tau}$ ($\tau\in {}^{\alpha}\alpha)$ are unary 
operations on $A$, such that postulates  
below hold for $x,y\in A$, $\tau,\sigma\in {}^{\alpha}\alpha$ and 
$\Gamma, \Delta\subseteq \alpha$ 

\begin{enumerate}
\item $\langle A,+,\cdot, -, 0, 1\rangle$ is a boolean algebra
\item ${\sf c}_{{(\Gamma)}}0=0$

\item $x\leq {\sf c}_{{(\Gamma)}}x$

\item ${\sf c}_{(\Gamma)}(x\cdot {\sf c}_{(\Gamma)}y)={\sf c}_{(\Gamma)}x \cdot {\sf c}_{(\Gamma)}y$

\item ${\sf c}_{(\Gamma)}{\sf c}_{(\Delta)}x={\sf c}_{(\Gamma\cup \Delta)}x$

\item ${\sf s}_{\tau}$ is a boolean endomorphism

\item ${\sf s}_{Id}x=x$

\item ${\sf s}_{\sigma\circ \tau}={\sf s}_{\sigma}\circ {\sf s}_{\tau}$

\item if $\sigma\upharpoonright (\alpha\sim \Gamma)=\tau\upharpoonright (\alpha\sim \Gamma)$, then 
${\sf s}_{\sigma}{\sf c}_{(\Gamma)}x={\sf s}_{\tau}{\sf c}_{(\Gamma)}x$


\item If $\tau^{-1}\Gamma=\Delta$ and $\tau\upharpoonright \Delta $ is
one to one, then ${\sf c}_{(\Gamma)}{\sf s}_{\tau}x={\sf s}_{\tau}{\sf c}_{(\Delta )}x$.

\end{enumerate}

\end{definition}

\begin{definition}Let $\alpha$ be an ordinal. By a {\it polyadic equality algebra} of dimension $\alpha$,  
or a $\PEA_{\alpha}$ for short,
we understand an algebra of the form
$$\A=\langle A,+,\cdot ,-,0,1,{\sf c}_{(\Gamma)},{\sf s}_{\tau}, {\sf d}_{ij} \rangle_{\Gamma\subseteq \alpha ,\tau\in {}^{\alpha}\alpha, i,j<\alpha}$$
where ${\sf c}_{(\Gamma)}$ ($\Gamma\subseteq  \alpha$) and ${\sf s}_{\tau}$ ($\tau\in {}^{\alpha}\alpha)$ are unary 
operations on $A$, and ${\sf d}_{ij}$ are constants in the signature, such the reduct obtained by deleting these ${\sf d}_{ij}$'s ($i, j\in \alpha$), 
is a $\PA_{\alpha}$ and  
the equations   
below hold for $x\in A$, $\tau\in {}^{\alpha}\alpha$ and 
and $i, j\in \alpha$ 
\begin{enumerate}

\item ${\sf d}_{ii}=1$,

\item $x\cdot {\sf d}_{ij}\leq {\sf s}_{[i|j]}x$,

\item ${\sf s}_{\tau}{\sf d}_{ij}={\sf d}_{\tau(i), \tau(j)}$.

\end{enumerate}
\end{definition}

We will sometimes add superscripts to cylindrifiers and substitutions indicating the algebra they are evaluated in.
The class of representable algebras is defined via set - theoretic operations
on sets of $\alpha$-ary sequences. Let $U$ be a set. 
For $\Gamma\subseteq \alpha$ and $\tau\in {}^{\alpha}\alpha$, we set
$${\sf c}_{(\Gamma)}X=\{s\in {}^{\alpha}U: \exists t\in X,\ \forall j\notin \Gamma, t(j)=s(j) \}$$
and 
$${\sf s}_{\tau}X=\{s\in {}^{\alpha}U: s\circ \tau\in X\}.$$
$${\sf D}_{ij}=\{s\in {}^{\omega}U: s_i=s_j\}.$$
For a set $X$, let $\B(X)$ be the boolean set algebra $(\wp(X), \cup, \cap, \sim).$
The class of representable polyadic algebras, or ${\RPA}_{\alpha}$ for short, is defined by
$$SP\{\langle \B(^{\alpha}U), {\sf c}_{(\Gamma)}, {\sf s}_{\tau} \rangle_{\Gamma\subseteq \alpha, \tau\in {}^{\alpha}\alpha}:
\ \,  U \text { a set }\}.$$
The class of representable polyadic equality algebras, or ${\sf RPEA}_{\alpha}$ for short, is defined by
$$SP\{\langle \B(^{\alpha}U), {\sf c}_{(\Gamma)}, {\sf s}_{\tau}, {\sf D}_{ij} \rangle_{\Gamma\subseteq \alpha, \tau\in {}^{\alpha}\alpha}:
\ \,  U \text { a set }\}.$$

Here $SP$ denotes the operation of forming subdirect products. It is straightforward to show that ${\RPA}_{\alpha}\subseteq \PA_{\alpha}.$ 
Daigneault and Monk \cite{DM} proved that for $\alpha\geq \omega$ the converse inclusion also holds, 
that is ${\RPA}_{\alpha}=\PA_{\alpha}.$ 
This is a completeness theorem for certain infinitary extensions of first order logic 
without equality \cite{K}.
Let $\A$ be a polyadic algebra and $f:\A\to \wp(^{\alpha}U)$ be a representation of $\A$.
If $s\in X$, we let
$$f^{-1}(s)=\{a\in \A: s\in f(a)\}.$$
An atomic representation $f:\A\to \wp(^{\alpha}U)$ is a representation such that for each 
$s\in V$, the ultrafilter $f^{-1}(s)$ is principal. 
A complete representation of $\A$ is a representation $f$ satisfying
$$f(\prod X)=\bigcap f[X]$$
whenever $X\subseteq \A$ and $\prod X$ is defined.

A completely additive boolean algebra with operators is one for which all extra non-boolean operations preserve arbitrary joins.

\begin{lemma}\label{r} Let $\A\in \PA_{\alpha}$. A representation $f$ of $\A$
is atomic if and only if it is complete. If $\A$ has a complete representation, then it is atomic and is completely additive.
\end{lemma}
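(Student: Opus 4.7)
The plan is to exploit the standard ultrafilter correspondence attached to any Boolean representation. For any $s$ in the base set $V$, the set $F_s := f^{-1}(s) = \{a \in \A : s \in f(a)\}$ is an ultrafilter of $\A$ (it is the preimage of $1$ under the Boolean homomorphism $\A \to \{0,1\}$ that evaluates $f(\cdot)$ at $s$), and by definition $f$ is atomic precisely when every $F_s$ is principal, say generated by a (necessarily unique) atom $a_s \in \A$.

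For the implication ``atomic implies complete'', I suppose each $F_s$ is generated by an atom $a_s$ and take $X \subseteq \A$ with $\prod X$ existing. The inclusion $f(\prod X) \subseteq \bigcap f[X]$ is immediate from monotonicity. Conversely, if $s \in \bigcap f[X]$, then $X \subseteq F_s$, so $a_s \leq x$ for every $x \in X$, hence $a_s \leq \prod X$, which yields $s \in f(a_s) \subseteq f(\prod X)$.

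For the implication ``complete implies atomic'', I argue by contradiction. Assume $f$ is complete but some $F_s$ is non-principal. Put $X = \{-a : a \in F_s\}$. I claim $\sum X = 1$ in $\A$: any upper bound $c$ of $X$ makes $-c$ a lower bound of $F_s$; if $-c > 0$, then by the ultrafilter property either $c \in F_s$, which forces $-c \leq c$ and so $-c = 0$, or $-c \in F_s$, which makes $-c$ a minimum of $F_s$ and hence makes $F_s$ principal. Either horn is a contradiction, so $c = 1$, establishing $\sum X = 1$. The complete representation hypothesis now yields $V = f(1) = \bigcup_{a \in F_s} f(-a)$; but $a \in F_s$ means $s \in f(a)$, i.e.\ $s \notin f(-a)$, so $s$ lies in none of the sets in the union, contradicting $s \in V$.

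Finally, if $\A$ admits a complete representation $f$, then $f$ is atomic, so for any non-zero $b \in \A$ one picks $s \in f(b)$ and the atom $a_s \in F_s$ satisfies $a_s \cdot b = a_s$, i.e.\ $a_s \leq b$, showing $\A$ is atomic. Complete additivity of each non-Boolean operation $o \in \{{\sf c}_{(\Gamma)}, {\sf s}_\tau\}$ then follows from the fact that the corresponding set-theoretic operation commutes with arbitrary unions: if $\sum X$ exists in $\A$ and $y$ is any upper bound of $o[X]$, then $f(y) \supseteq \bigcup_{x \in X} o(f(x)) = o\bigl(\bigcup_{x \in X} f(x)\bigr) = o(f(\sum X)) = f(o(\sum X))$, which forces $y \geq o(\sum X)$ since $f$ is an embedding. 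The main technical obstacle is the witness set in the ``complete implies atomic'' step: the complete representation hypothesis only applies when a sum or product actually exists in $\A$, and $F_s$ itself need not admit a meet in $\A$; the trick is that the set of complements of a non-principal ultrafilter always admits $1$ as its join in $\A$, and this is exactly the join that the representation is unable to preserve at the witnessing point $s$.
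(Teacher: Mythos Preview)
Your proof is correct.  For the equivalence of atomic and complete representations the paper simply cites \cite{HH}; the argument you give is precisely the standard one underlying that reference, so there is nothing to compare.

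For complete additivity your route differs from the paper's.  The paper invokes that $\PA_\alpha$ is a discriminator variety (with discriminator term ${\sf c}_{(\alpha)}$), hence semi-simple, and reduces to the case of a simple $\A$ and a family $X$ with $\sum X=1$; it then argues only for the substitution operators ${\sf s}_\tau$, using that the concrete ${\sf S}_\tau$ distributes over unions together with injectivity of $f$.  (Cylindrifiers are tacitly handled because in any $\PA_\alpha$ the operator ${\sf c}_{(\Gamma)}$ is self-conjugate via axiom (4), hence completely additive regardless of representability.)  You bypass all of this: you work directly with an arbitrary existing join, treat ${\sf c}_{(\Gamma)}$ and ${\sf s}_\tau$ uniformly, and use completeness of $f$ to turn $\bigcup_{x\in X} f(x)$ into $f(\sum X)$ before applying set-theoretic distributivity and order-reflection by the embedding.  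Your approach is more elementary---it needs no structural facts about the variety---while the paper's approach isolates that the only genuinely new content is additivity of substitutions, the cylindrifier case being automatic.
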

\begin{demo}{Proof} The first part is like \cite{HH}.
For the second part, we note that $\PA_{\alpha}$ is a discriminator variety with discriminator term ${\sf c}_{(\alpha)}$.
And so because all algebras in $\PA_{\alpha}$ are semi-simple,
it suffices to show that if $\A$ is simple, $X\subseteq A$, is such that $\sum X=1$,
and there exists  an injection $f:\A\to \wp(^{\alpha}\alpha)$, such that $\bigcup_{x\in X}f(x)=V$,
then for any $\tau\in {}^{\alpha}\alpha$, we have $\sum s_{\tau}X=1$. So assume that this does not happen 
for some $\tau \in {}^{\alpha}\alpha$.
Then there is a $y\in \A$, $y<1$, and
$s_{\tau}x\leq y$ for all $x\in X$.
Now
$$1=s_{\tau}(\bigcup_{x\in X} f(x))=\bigcup_{x\in X} s_{\tau}f(x)=\bigcup_{x\in X} f(s_{\tau}x).$$
(Here we are using that $s_{\tau}$ distributes over union.)
Let $z\in X$, then $s_{\tau}z\leq y<1$, and so $f(s_{\tau}z)\leq f(y)<1$, since $f$ is injective, it cannot be the case that $f(y)=1$.
Hence, we have
$$1=\bigcup_{x\in X} f(s_{\tau}x)\leq f(y) <1$$
which is a contradiction, and we are done.

\end{demo}

Let $\alpha$ be an infinite ordinal. By \cite{au}, it is proved that  the the condition of atomicity and complete
additivity of a $\PA_{\alpha}$ is not only necessary for completely representability but also sufficient.
But for $\PEA_{\alpha}$ the situation is totally diferent. Not only not every atomic $\PEA_{\alpha}$ is not completely representable; 
there are examples of atomic $\PEA_{\alpha}$s that are not reprsentable at all. The aim of this paper is that if we pass to finte neat reducts 
of any atomic $\PEA_{\alpha}$ possibly non representable,  
we recover complete representability.

\subsection{Classes between $\Df_n$ and $\QEA_n$}

We shall have the occasion to deal with (in addition to $\CA$s), the following cylindric--like algebras \cite{1}: $\sf Df$ short for diagonal free cylindric algebras, $\sf Sc$ short for Pinter's substitution algebras,  
$\sf QA$($\QEA$) short for quasi--polyadic (equality) algebras.  
For $\K$ any of these classes and $\alpha$ any ordinal, 
we write $\K_{\alpha}$ for variety of $\alpha$--dimensional $\K$ algebras which can be axiomatized by a finite schema of equations, 
and $\sf RK_{\alpha}$ for the class of  representable $\K_{\alpha}$s, which happens to be a variety too (that cannot be axiomatized by a finite schema of equations for $\alpha>2$ unless $\sf K=\sf PA$ and $\alpha\geq \omega$).
The standard reference for all the classes of algebras mentioned previously  is  \cite{HMT2}. 
We recall the concrete verions of such algebras.
Let $\tau:\alpha\to \alpha$
and $X\subseteq {}^{\alpha}U,$  then 
$${\sf S}_{\tau}X=\{s\in {}^{\alpha}U: s\circ \tau\in X\}.$$
For $i,j\in \alpha$, $[i|j]$ is the replacement on $\alpha$ that sends $i$ to $j$ and is the identity map on $\alpha\sim \{i\}$ while $[i,j]$ is the transposition on $\alpha$ that interchanges $i$ and $j$. 
\begin{itemize}
\item A {\it diagonal free cylindric set algebra of dimension $\alpha$} is an algebra of the form 
$\langle \B(^{\alpha}U),  {\sf C}_i\rangle_{i,j<\alpha}.$ 
\item A {\it Pinter's substitution et algebra of dimension $\alpha$} is an algebra of the form 
$\langle \B(^{\alpha}U),  {\sf C}_i,  {\sf S}_{[i|j]}\rangle_{i,j<\alpha}.$ 
\item A {\it quasi-polyadic set algebra of dimension $\alpha$} is an algebra of the form\\ 
$\langle \B(^{\alpha}U),  {\sf C}_i,  {\sf S}_{[i|j]}, {\sf S}_{[i,j]}\rangle_{i,j<\alpha}.$ 
\item A {\it quasi-polyadic equality set algebra} is an algebra of the form\\ 
$\langle \B(^{\alpha}U),  {\sf C}_i,  {\sf S}_{[i|j]}, {\sf S}_{[i,j]}, {\sf D}_{ij}\rangle_{i,j<\alpha}$.
\end{itemize}

\begin{figure}
\[\begin{array}{l|l}
\mbox{class}&\mbox{extra non-Boolean operators}\\
\hline
\Df_{\alpha}& \cyl i: i<\alpha\\
\Sc_\alpha&\cyl i, \s_i^j :i, j<\alpha\\
\CA_\alpha&\cyl i, \diag i j: i, j<\alpha\\
\PA_\alpha&\cyl i, \s_\tau: i<n,\; \tau\in\;^\alpha\alpha\\
\PEA_\alpha&\cyl i, \diag i j,  \s_\tau: i, j<n,\;  \tau\in\;^\alpha\alpha\\
\QA_\alpha&  \cyl i, \s_i^j, \s_{[i, j]} :i, j<\alpha  \\
\QEA_\alpha&\cyl i, \diag i j, \s_i^j, \s_{[i, j]}: i, j<\alpha
\end{array}\]
\caption{Non-Boolean operators for the classes\label{fig:classes}}
\end{figure}

For a $\sf BAO$, $\A$ say, for any ordinal $\alpha$, $\Rd_{ca}\A$ denotes the cylindric reduct of $\A$ if it has one, $\Rd_{sc}\A$
denotes the $\Sc$ reduct of $\A$ if it has one, and
$\Rd_{df}\A$ denotes the reduct of $\A$ obtained by discarding all the operations except for cylindrifications.
If $\A$ is any of the above classes, it is always the case that $\Rd_{df}\A\in \sf Df_{\alpha}$. If $\A\in \CA_\alpha$, then $\Rd_{sc}\A\in \Sc_\alpha$, and if $\A\in \QEA_{\alpha}$ then $\Rd_{ca}\A\in \CA_{\alpha}$.
Roughly speaking for an ordinal $\alpha$, $\CA_{\alpha}$s are  not expansions of $\Sc_{\alpha}$s, but they are {\it definitionally equivalent} to expansions of $\Sc_{\alpha}$, 
because the $\s_i^j$s are term definable in $\CA_{\alpha}$s by 
$\s_i^j(x)=\c_i(x\cdot {-\sf d}_{ij})$ $(i,j<\alpha)$. This operation reflects algebraically the subsititution of the variable $v_j$ for $v_i$ in a formula such that the substitution is free; this can be always done by reindexing bounded variables.
In such  situation, we say that $\Sc$s are {\it generalized reducts} of $\CA$s. However, $\CA_{\alpha}$s and $\sf \QA_{\alpha}$ are (real )reducts of $\QEA$s, (in the universal algebraic sense) simply obtained by discarding 
the operations in their signature not in the signature of their common expansion 
$\QEA_{\alpha}$.
We give a finite  approximate equational axiomatization of the concrete algebras defined above, which are the prime source of inspiration for these axiomatizations introduced to capture representability.
However, like for $\CA$s, this works only for certain special cases like the locally finite algebras, but does not generalize much further, cf Proposition \ref{j}.
\begin{definition}\label{def:qpea}
\begin{description}
\item[Substitution Algebra, $\Sc$]
\cite{Pinter}.

Let $\alpha$ be an ordinal. By a substitution algebra of dimension $\alpha$, briefly an $\Sc_{\alpha},$ we mean an algebra
$${\A}=\langle  A,+, \cdot, - , 0, 1, {\sf c}_i, \sub i j \rangle_{i, j<\alpha}$$
where $\langle A, +, \cdot, -, 0, 1\rangle$ is a Boolean algebra, $\cyl i, \sub i j$ are unary operations on $\A$ (for $i, j<\alpha$) satisfying
the following equations for all $i,j,k,l < \alpha$:

\begin{enumerate}

\item \label{en:s1}$\cyl i 0=0,\; x\leq \cyl i x, \; \cyl i(x\cdot \cyl i y)
=\cyl i x\cdot \cyl i y$, and $\cyl i\cyl j x =\cyl j\cyl i x,$

\item $\sub i i x=x,$\label{en:id}

\item $\sub i j$ is a boolean endomorphisms,\label{en:end}

\item   $\sub i j\cyl ix=\cyl ix,$\label{en:s3}

\item  $\cyl i\sub i jx=\sub i jx$ whenever $i\neq j,$

\item  $\sub i j\cyl kx=\cyl k\sub i j x$, whenever $k\notin \{i,j\},$

\item  $\cyl i\sub j i x=\cyl j\sub i j x,$

\item $\sub j i \sub l k x=\sub l k \sub j ix$, whenever $|\{i,j,k,l\}|=4,$

\item  $\sub l i \sub j l x=\sub l i \sub j i x$.\label{en:s9}
\end{enumerate}

\item[Quasipolyadic algebra, $\QEA$]  \cite{ST}.  

A quasipolyadic algebra of dimension $ \alpha$, briefly a
$\QA_{\alpha}$, is an algebra $$\A=\langle A,+, \cdot, -, 0, 1, \cyl i,
\sub i j, \swap i j\rangle_{i, j<\alpha}$$
where the reduct to $\Sc_\alpha$ is a substitution algebra (it satisfies \eqref{en:s1}--\eqref{en:s9} above) and additionally it satisfies  the following
equations for all $i, j, k <  \alpha$:
\begin{enumerate}
\item [\ref{en:id}'] $\sub i i (x)=\swap i i (x)=x,\text { and }\swap i j=\swap j i,$
\item [\ref{en:end}'] $\sub i j$ and $\swap i j$ are boolean endomorphisms,
\item  $\swap i j\swap i j x=x$,
\item $\swap i j\swap i k=\swap jk\swap ij  ~~ \text { if
}~~ |\{i,j,k\}|=3$,
\item  $\swap i j \sub j i x=\sub i j x$.\label{en:12}
\end{enumerate}

\item[Quasipolyadic equality algebra, $\QEA$]\cite{ST}.

A quasipolyadic equality algebra of dimension $\alpha$, briefly
a $\QEA_{ \alpha}$ is an algebra $$\B=\langle \A, {\sf d}_{ij}\rangle_{i,j<
 \alpha}$$ where $\A$ is a $\QA_{\alpha}$ (i.e. it satisfies all the equations above),  ${\sf d}_{ij}$ is a
constant  and the following equations hold, for all $i, j, k <  \alpha$:
	\begin{enumerate}
\item $\sub i j {\sf d}_{ij}=1$,
\item  $x\cdot {\sf d}_{ij}\leq \sub i j x$.
\end{enumerate}

\end{description}
\end{definition}

\begin{definition} Let $\alpha$ be an ordinal. We say that a variety $\sf V$ is a variety between $\Df_\alpha$ and $\QEA_\alpha$  
if the signature of $\sf V$ 
expands that of $\Df_\alpha$ and is contained in the signature of $\QEA_\alpha$. Furthermore, 
any  equation formulated in the signature of $\Df_\alpha$ that holds in $\sf V$ also holds in $\Sc_\alpha$ 
and all  equations that hold in $\sf V$ holds in $\sf QEA_\alpha$. 
\end{definition}
Proper examples include $\Sc$, $\CA_\alpha$ and ${\sf QA}_\alpha$ (meaning strictly between). 
Analogously we can define varieties between $\Sc_\alpha$ and $\CA_\alpha$ or $\QA_\alpha$ and $\QEA_\alpha$, and more generally between a class $\sf K$ of $\sf BAO$s and a generalized reduct of it.
Notions like neat reducts generalize verbatim to such 
algebras, namely, to $\Df$s and $\QEA$s, and in any variety in between. This stems from the observation that for any pair of ordinals $\alpha<\beta$, $\A\in \QEA_{\beta}$ and any non-Boolean exra operation in the signature of $\QEA_{\beta}$, $f$ say, 
if $x\in \A$ and $\Delta x\subseteq \alpha$, then $\Delta(f(x))\subseteq \alpha$. Here $\Delta x=\{i\in \beta: \c_ix\neq x\}$ is referred as {\it the dimension set} of $x$; it reflects algebraically the essentially free variables occuring in a formula $\phi$. A variable is essentially free in a formula $\Psi$ $\iff$ it is free in every formula equivalent to $\Psi$.\footnote{It can well happen that a variable is free in  formula that is equivalent to another formula in which this same variable is not free.}
Therefore given a variety $\V$ between $\Sc_{\beta}$ and $\QEA_{\beta}$, if $\B\in \V$ then the algebra $\Nrr_{\alpha}\B$ having universe $\{x\in \B: \Delta x\subseteq \alpha$\} is closed under all operations in the signature
of $\V$.   

\begin{definition} Let $2<n<\omega$. For a variety $\sf V$ between $\Df_n$ and $\QEA_n$, a {\it $\sf V$ set algebra} is a subalgebra of an algebra, having the same signature as $\V$, of the form $\langle \B({}^nU), f_i^U)$, say, 
where $f_i^U$ is identical to the interpretation 
of $f_i$ in the class of quasipolyadic equality set algebras.  Let $\A$ be an algebra having the same signature of $\V$; then $\A$ is {\it a representable $\sf V$ algebra}, or simply {\it representable} 
$\iff$ $\A$ is isomorphic to a subdirect product of $\sf V$ set algebras. We write $\sf RV$ for the class of representable $\V$ algebras
\end{definition}
It can be proved that the class $\sf RV$,  as defined above, is also closed under $\bold H$, so that it is a variety. This can be proved using the same argument to show that $\RCA_n$ is a variety, cf. Corollary \cite[3.1.77]{HMT2}. 
Take $\A\in \sf RV$, an ideal $J$ of $\A$, 
then show that $\A/J$ is in $\sf RV$. Ideals in $\sf BAO$s are defined as follows. We consider only $\sf BAO$s with extra unary non-Boolean operators to simplify notation. If $\A$ is a $\sf BAO$, 
then $J\subseteq \A$ is an ideal in $J$ if is a Boolean ideal and for any extra 
non-Boolean operator $f$, say, in the signature of $\sf BAO$, and $x\in \A$, $f(x)\in \A$; the quotient algebra $\A/J$ is defined the usual way since ideals defined in this way correspond to congruence relations defined on $\A$. 
\begin{theorem} \label{j} Let $2<n<\omega$. Let $\V$ be a variety between $\Df_n$ and $\QEA_n$. Then $\sf RV$ is not a finitely axiomatizable variety.
\end{theorem}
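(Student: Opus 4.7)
\begin{demo}{Proof sketch (plan)}
The plan is to run a Monk-style non-finite-axiomatizability argument uniformly across the interval from $\Df_n$ to $\QEA_n$, exploiting the fact that non-representability is monotone downward in signature, while representability is monotone upward. Concretely, if $\sf RV$ were finitely axiomatizable then $\sf RV$ and its complement in $\sf V$ would both be elementary, hence closed under ultraproducts; so it suffices to produce $\B_k\in\sf V\setminus\sf RV$ for $k<\omega$ such that some non--principal ultraproduct $\prod_{k}\B_k/D$ lies in $\sf RV$.

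First I would fix, for each $k$ large enough, a Monk-like $\QEA_n$ algebra $\A_k=\Cm(\M_k)$, where $\M_k$ is a finite $\CA_n$ atom structure built from a finite graph $G_k$ (say with chromatic number $\geq k$ and girth $\geq k$); one then expands $\Cm(\M_k)$ canonically to a full $\QEA_n$ by adding the diagonals and substitution operators in the obvious way on the atom structure. The two standard features of this construction are (i) for each finite $k$, the bare $\Df_n$-reduct $\Rd_{df}\A_k$ is not representable as a $\Df_n$ (the chromatic number of $G_k$ witnesses a failure of the Lyndon-type condition already at the diagonal-free level), and (ii) the non-principal ultraproduct $\prod_k\A_k/D$ is based on a graph with infinite chromatic number, and hence is representable as a full $\QEA_n$; these are exactly the graph-theoretic ingredients underlying Monk's original proof that $\RCA_n$ is non-finitely axiomatizable, combined with a Ramsey-type compactness argument to ensure the infinite-chromatic-number property is preserved under ultraproducts.

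Now let $\B_k=\Rd_{\V}\A_k$ be the $\V$-reduct of $\A_k$ (well defined since $\V$ sits between $\Df_n$ and $\QEA_n$, and ultraproducts preserve the defining equations of $\V$, so each $\B_k\in\sf V$). If $\B_k$ were representable as a $\V$-algebra, then by taking a further reduct we would obtain a $\Df_n$-representation of $\Rd_{df}\A_k$, contradicting (i); thus $\B_k\in\sf V\setminus\sf RV$. On the other hand, by (ii), $\prod_k\A_k/D$ has a $\QEA_n$-representation, and restricting the representing isomorphism to the $\V$-signature yields a $\V$-representation of $\prod_k\B_k/D=\Rd_{\V}\bigl(\prod_k\A_k/D\bigr)$. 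This furnishes the required ultraproduct obstruction, and contradicts the assumption that $\sf RV$ is finitely axiomatizable.

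The main obstacle, and the only part that is not purely formal transfer between signatures, is step (ii): verifying that the Monk--type atom structures $\M_k$ admit an ultraproduct whose complex algebra is representable as a \emph{full} $\QEA_n$, so that the representation survives restriction to every intermediate $\V$. This is where a Ramsey-style combinatorial argument on the graphs $G_k$ (large chromatic number together with large girth, or the analogous probabilistic construction) is essential; the diagonal-free non-representability in step (i) is then comparatively routine once the atom structure is set up. Both pieces are standard in the Monk/Hirsch--Hodkinson literature, and one simply has to check that the ``extras'' needed to pass from $\CA_n$ to $\QEA_n$ (the substitutions $\sub i j$, transpositions $\swap i j$, and diagonals $\diag i j$) act in the expected way on atoms so that the $\QEA_n$ axioms and the representation are respected uniformly in $k$.
\end{demo}
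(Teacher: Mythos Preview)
Your proposal is correct and follows essentially the same route as the paper: exhibit $\QEA_n$s $\A_k$ whose $\Df_n$-reducts are non-representable yet whose non-principal ultraproduct lies in ${\sf RQEA}_n$, then take $\V$-reducts and invoke \L o\'s to contradict finite axiomatizability. The only difference is that the paper cites Johnson's sequence directly rather than sketching a Monk/graph construction; as a minor aside, for \emph{this} direction the large-girth hypothesis on the $G_k$ is superfluous (you only need $\chi(G_k)\to\infty$ so that the ultraproduct graph has infinite chromatic number)---girth enters in the dual argument that strongly representable atom structures are not elementary.
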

\begin{proof} In \cite{j} a sequence $\langle \A_i: i\in \omega\rangle$  of algebras is constructed such that $\A_i\in  \QEA_n$ and $\Rd_{df}\A_n\notin {\sf RDf}_n$, but $\Pi_{i\in \omega}\A_i/F\in {\sf RQEA}_n$ for any non principal ultrafilter on $\omega$.
An appilcation of Los' Theorem, taking the ultraproduct of  $\V$ reduct of the $\A_i$s,  finishes the proof. In more detail, let $\Rd_{\V}$ denote restricting the signature to that of $\V$. 
Then  $\Rd_{\V}\A_i\notin \sf RV$ and $\Rd_{\V}\Pi_{i\in I}(\A_i/F)\in \sf RV$.
\end{proof} 
The last result generalizes to infinite dimensions replacing finite axiomatization by axiomatized by a finite schema \cite{HMT2, t}. 
We consider relation algebras as algebras of the form ${\cal R}=\langle R, +, \cdot, -, 1', \smile, ; , \rangle$, where $\langle R, +, \cdot , -\rangle$ is a Boolean algebra $1'\in R$, $\smile$ is a unary operation and $;$ is a binary operation. 
A relation algebra is {\it representable}$\iff$ it is isomorphic to a subalgebra of the form $\langle \wp(X), \cup, \cap, \sim, \smile, \circ, Id\rangle$, where $X$ is an equivalence relation, $1'$ is interpreted as the identity relation, $\smile$ is the operation of forming converses, 
and$;$  is interpreted as composition of relations. 
Following standard notation, $(\sf R)RA$ denotes the class of (representable) relation algebras. The class $\sf RA$ is  a discriminator variety that is finitely axiomatizable, cf. \cite[Definition 3.8, Theorems 3.19]{HHbook}.  
We let $\sf CRRA$ and $\sf LRRA$, 
denote the classes of completely representable $\RA$s, and its elementary closure, namely, the class of $\RA$s 
satisfying the Lyndon conditions  as defined in \cite[\S 11.3.2]{HHbook}, respectively. Complete representability of $\sf RA$s is defined like the $\CA$ case.
All of the above classes of algebras are instances of $\sf BAO$s. 
The action of the non--Boolean operators in a completely additive (where operators distribute over arbitrary joins componentwise) 
atomic $\sf BAO$, is determined by their behavior over the atoms, and
this in turn is encoded by the {\it atom structure} of the algebra.

\section{Complete representability via atomic dilations}

We recall from \cite[Definition~5.4.16]{HMT2},  the notion of neat reducts of polyadic algebras. We shall be dealing with infinite dimensional such
algebras. Because infinite cylindrfication is allowed,  the definition of neat reducts is different from the $\CA$ case. 
We define the neat reduct operator 
only $\PA$s; the $\PEA$ case is entirely analgous considering diagonal elements.
 \begin{definition} Let $J\subseteq \beta$ and
$\A=\langle A,+,\cdot ,-, 0, 1,{\sf c}_{(\Gamma)}, {\sf s}_{\tau} \rangle_{\Gamma\subseteq \beta ,\tau\in {}^{\beta}\beta}$
be a $\PA_{\beta}$.
Let $Nr_J\B=\{a\in A: {\sf c}_{(\beta\sim J)}a=a\}$. Then
$${\bf Nr}_J\B=\langle Nr_{J}\B, +, \cdot, -, {\sf c}_{(\Gamma)}, {\sf s}'_{\tau}\rangle_{\Gamma\subseteq J,  \tau\in {}^{\alpha}\alpha}$$
where ${\sf s}'_{\tau}={\sf s}_{\bar{\tau}}.$ Here $\bar{\tau}=\tau\cup Id_{\beta\sim \alpha}$.
The structure ${\bf Nr}_J\B$ is an algebra, called the {\it $J$--compression} of $\B$.
When $J=\alpha$, $\alpha$ an ordinal, then ${\bf Nr}_{\alpha}\B\in \PA_{\alpha}$ and it is
called the neat $\alpha$ reduct of $\B$. 
\end{definition}
Assume that $\B\in \PEA_{\beta}$ for some infinite ordinal $\beta$. Then for $n<\omega$, ${\bf Nr}_n\B\subseteq \Nr_n\Rd_{qea}\B$, where 
$\Rd_{qea}$ denotes the quasi--polyadic reduct of $\B$, obtained by discarding infinitary
substitutions and  the definition of the neat reduct opeartor $\Nr_n$ here is like the $\CA$ case not involving infinitary cylindrifiers.
Indeed, if $x\in {\bf Nr}_n\B$, then ${\sf c}_{(\beta\setminus n)}x=x$, so for any $i\in \beta\setminus n$,
${\sf c}_ix\leq {\sf c}_{(\beta\setminus n)}x=x\leq {\sf c}_ix$,
hence ${\sf c}_ix=x$.   However, the converse might not be true. If $x\in \Nr_n\Rd_{qea}\B$, then ${\sf c}_ix=x$ for all $i\in \beta\setminus n$, but 
this does not imply that ${\sf c}_{(\beta\setminus n)}x=x;$ it can happen that ${\sf c}_{(\beta\setminus n)}x>x={\sf c}_ix$ (for all $i\in \beta\setminus n$).
We will show in a moment that if 
$\C\in {\sf PEA}_{\omega}$ is atomic and $n<\omega$, then both $\Nr_n\Rd_{qea}\C$ and ${\bf Nr}_n\C$ are 
completely representable $\PEA_n$s.  
This gives a plethora of completely representable ${\sf PEA}_n$s whose $\CA$ reducts are (of course) 
also completely representable.

For $\alpha\geq \omega$, we let ${\sf CPA}_{\alpha}$ (${\sf CPEA}_{\alpha}$) denote the reduct of $\PA_{\alpha}$($\PEA_{\alpha}$) whose signature is 
obained from that of $\PA_{\alpha}$ ($\PEA_{\alpha}$) by discarding all infinitary cylindrifiers, and its axiomatization is that of 
$\PA_{\alpha}$($\PEA_{\alpha})$ restricted to the new signature.
$\QA_{\alpha}$ ($\QEA_{\alpha}$) denotes the class of quasi--polyadic (equality) algebras obtained by restricting the signature and axiomatization of $\PA_{\alpha}(\PEA_{\alpha}$) to
only finite substitutions and cylindrifiers.  So here the signature does not contain {\it infinitary} substitutions, the $\s_{\tau}$s are defined only for 
those maps $\tau:\alpha\to \alpha$ that move 
only finitely many points.   With cylindrifiers defined only on finitely many indices, the neat reduct operator $\Nr$ for $\QA_{\alpha}$, $\QEA_{\alpha}$, $\sf CPA_{\alpha}$ and 
$\sf CPEA_{\alpha}$ is defined analogous 
to the $\CA$ case. 
We present analogous positive results typically of the form:  If $\bold K$ is any of the classes defined above 
(like $\CPEA, CPA, QEA, QA$), $\D\in \bold K_{\omega}$ is atomic, $n<\omega$, 
then (under certain conditions on $\D$)  $\Nr_n\D$ is completely representable. The `certain conditions' will be 
formulated only for the dilation $\D$ and will not depend on $n$. For example for 
$\PEA$, mere atomicity  of $\D$ will suffice, 
for $\PA$ we will need complete additivity of $\D$ too. 

We need a crucial lemma. But first a definition:
\begin{definition} A {\it transformation system} is a quadruple of the form $(\A, I, G, {\sf S})$ where $\A$ is an 
algebra of any signature, 
$I$ is a non--empty set (we will only be concerned with infinite sets),
$G$ is a subsemigroup of $(^II,\circ)$ (the operation $\circ$ denotes composition of maps) 
and ${\sf S}$ is a homomorphism from $G$ to the semigroup of endomorphisms of $\A$. 
Elements of $G$ are called transformations. 
\end{definition}
The next lemma says that, roughly, in the presence of all substitution operators in the infinite dimensional case, one can 
form dilations in any higher dimension.
\begin{lemma}\label{dilation} Let $\alpha$ be an infinite ordinal and $\bold K\in \{\PA, \PEA\}$. Let $\D\in \bold K_{\alpha}.$ 
Then for any ordinal $\mathfrak{n}>\alpha$,  there exists $\B\in \bold K_{\mathfrak{n}}$ 
such that $\D={\bf Nr}_{\alpha}\B$. 
Furthermore, if $\D$ is atomic (complete), then $\B$ can be chosen to be atomic (complete). An entirely analogous result holds for 
$\sf CPA$ and $\sf CPEA$ replacing the operator ${\bf Nr}$ by the neat reduct operator $\Nr$.
\end{lemma}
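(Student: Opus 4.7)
The strategy is to construct $\B$ as a minimal dilation of $\D$ by exploiting the rich substitution structure of the infinite-dimensional polyadic signature. Since not every atomic $\PEA_\alpha$ is representable (as the paper emphasizes), I avoid appealing to the Daigneault--Monk representation and work purely algebraically. I proceed in four steps.

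\textbf{Step 1 (Construction).} Let $\B_0$ be the absolutely free $\bold K_{\mathfrak{n}}$-algebra (with $\bold K\in\{\PA,\PEA\}$) generated by a set $\{\bar d : d\in D\}$ of formal copies of the elements of $D$, and let $J$ be the congruence on $\B_0$ generated by the equations encoding the $\bold K_\alpha$-structure of $\D$: $\overline{d+e}=\bar d+\bar e$, $\overline{-d}=-\bar d$, $\overline{{\sf c}_{(\Gamma)}^\D d}={\sf c}_{(\Gamma)}^{\B_0}\bar d$ for $\Gamma\subseteq \alpha$, $\overline{{\sf s}_\tau^\D d}={\sf s}_{\bar\tau}^{\B_0}\bar d$ for $\tau\in{}^\alpha\alpha$ (where $\bar\tau=\tau\cup\mathrm{id}_{\mathfrak{n}\setminus\alpha}$), and, in the $\PEA$ case, $\overline{{\sf d}_{ij}^\D}={\sf d}_{ij}^{\B_0}$ for $i,j<\alpha$. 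Set $\B=\B_0/J$ and let $\iota:\D\to\B$ send $d$ to $\bar d/J$.

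\textbf{Step 2 (Neat compression).} By design, $\iota$ is a $\bold K_\alpha$-homomorphism. Injectivity follows because $\D$ itself models the generating equations of $J$, giving a homomorphism $\B_0\to\D$ with $\bar d\mapsto d$ that factors through $\iota$. For the crucial equality $\iota(D)={\bf Nr}_\alpha\B$, the inclusion $\subseteq$ is immediate because each $\iota(d)$ has dimension set $\subseteq\alpha$. For $\supseteq$, given $b\in{\bf Nr}_\alpha\B$, represent $b$ by a $\bold K_{\mathfrak{n}}$-term $t(\bar d_1,\ldots,\bar d_k)$ and use the hypothesis ${\sf c}_{(\mathfrak{n}\setminus\alpha)}^\B b=b$ together with the transformation-system axioms to rewrite $t$ in a normal form in which every substitution has image contained in $\alpha$; the resulting expression then evaluates via the relations in $J$ to some $\iota(d)$ with $d\in D$.

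\textbf{Step 3 (Atomicity/completeness and adaptation to $\CPA,\CPEA$).} If $\D$ is atomic (respectively complete), I pass to the Dedekind--MacNeille completion of $\B$, which preserves the polyadic axioms because all non-Boolean operators of $\bold K_{\mathfrak{n}}$ are completely additive under our hypotheses (automatic for $\PEA$; assumed in the $\PA$ case via Lemma \ref{r}). Each atom of $\D$ extends to an atom of $\B$ below the corresponding $\iota$-image, and these atoms jointly generate the top, so $\B$ is atomic (respectively complete). For $\CPA$ and $\CPEA$, the signature contains only finite cylindrifications and ${\bf Nr}_\alpha$ is replaced by the standard $\Nr_\alpha$; since $J$ uses only operations present in these reducts the construction carries over verbatim.

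The main obstacle is the rewriting argument in Step 2. A priori a substitution ${\sf s}_\tau^\B$ with $\tau$ shuffling indices between $\alpha$ and $\mathfrak{n}\setminus\alpha$ could produce an element of $\B$ whose dimension set lies in $\alpha$ yet which is not visibly in the image of $\iota$. Closing this gap requires a careful normal-form analysis of polyadic terms, and is exactly where the infinite-dimensional hypothesis enters decisively: the presence of \emph{all} substitutions $\tau\in{}^\alpha\alpha$ in the signature of $\D$ permits one to ``absorb'' the action of such a $\tau$ into an operation already interpreted in $\D$, thereby pushing $b$ back inside $\iota(D)$.
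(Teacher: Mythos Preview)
Your free-algebra-quotient approach is a natural first instinct, but it has two genuine gaps, and the paper takes a quite different (and more direct) route.

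\textbf{The serious gap is Step 3.} For a quotient of a free $\bold K_{\mathfrak{n}}$-algebra there is no reason whatsoever to expect atomicity. Free algebras in varieties of $\sf BAO$s are typically atomless, and nothing in your congruence $J$ forces atoms of $\D$ to remain atoms in $\B$: an element $\iota(a)$ with $a\in\At\D$ could in principle be split by combinations of substitutions ${\sf s}_\tau$ with $\tau$ moving indices in and out of $\alpha$. Your proposed fix---passing to the Dedekind--MacNeille completion---does not help: completion yields completeness, not atomicity, and in any case after replacing $\B$ by its completion you would have to re-establish $\D={\bf Nr}_\alpha\B$, which is not automatic. The sentence ``each atom of $\D$ extends to an atom of $\B$ below the corresponding $\iota$-image'' is exactly the point that needs proof and is not supplied.

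\textbf{Step 2 is also only a sketch.} You correctly identify the hard inclusion ${\bf Nr}_\alpha\B\subseteq\iota(D)$ and gesture at a normal-form argument, but the actual rewriting---showing that every polyadic term over $\D$-generators whose value has dimension set $\subseteq\alpha$ reduces modulo $J$ to some $\bar d$---is precisely the substance of the Daigneault--Monk machinery you said you wanted to avoid.

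\textbf{How the paper proceeds instead.} Rather than a free construction, the paper builds $\B$ concretely as a \emph{functional dilation}: one embeds $\D$ into the function algebra $F({}^{\alpha}\alpha,\D)$ via $p\mapsto(\tau\mapsto{\sf s}_\tau p)$, then extends to $F({}^{\mathfrak{n}}\alpha,\D)$, and defines the missing cylindrifiers (and diagonals) by explicit formulas coming from \cite{DM}. The payoff is that the Boolean reduct of $\B$ is then isomorphic to $F({}^{\mathfrak{n}}\alpha,\D)$, i.e.\ a direct product of copies of the Boolean reduct of $\D$. Atomicity and completeness of $\B$ are then immediate, since products of atomic (complete) Boolean algebras are atomic (complete). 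The equality $\D={\bf Nr}_\alpha\B$ (or $\D=\Nr_\alpha\B$ in the $\sf CPA/CPEA$ case) is verified directly from the explicit description of elements of $\B$ as ${\sf s}_\sigma p$ with $p\in\D$ and $\sigma\upharpoonright\alpha$ injective. This concrete product structure is exactly what your abstract quotient lacks, and it is what makes the atomicity/completeness transfer a one-line observation rather than an open problem.
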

\begin{proof} 
Let $\bold K\in \{\sf PA, PEA, CPA, CPEA\}.$ Assume that $\D\in \bold K_{\alpha}$ and that $\mathfrak{n}>\alpha$. 
If $|\alpha|=|\mathfrak{n}|$, then one fixes a bijection $\rho:\mathfrak{n}\to \alpha$, and defines the $\mathfrak{n}$-dimensional dilation of the diagonal free
reduct of $\D$, having the same universe as $\D$, by re-shuffling the operations of $\D$ along $\rho$ \cite{DM}. 
Then one defines diagonal elements in the $\mathfrak{n}$-dimensional dilation of the diagonal free reduct of $\D$, 
by using the diagonal elements in  $\D$ \cite[Theorem 5.4.17]{HMT2}. 
Now assume that $|\mathfrak{n}|>|\alpha|$.  
Let ${\sf End}(\D)$ be the semigroup of Boolean endomorphisms on
$\D$.  Then the map $\sf S:{}^\alpha\alpha\to {\sf End}(\A)$ defined  via $\tau\mapsto {\sf s}_{\tau}$ is a homomorphism of semigroups.
The operation on both semigroups is composition of maps, so that $(\D, \alpha, {}^{\alpha}\alpha, \sf S)$ is a transformation system. 
For any set $X$, let $F(^{\alpha}X,\A)$
be the set of all maps from $^{\alpha}X$ to $\A$ endowed with Boolean  operations defined pointwise and for
$\tau\in {}^\alpha\alpha$ and $f\in F(^{\alpha}X, \A)$, put ${\sf s}_{\tau}f(x)=f(x\circ \tau)$.
This turns $F(^{\alpha}X,\A)$ to a transformation system as well.
The map $H:\A\to F(^{\alpha}\alpha, \A)$ defined by $H(p)(x)={\sf s}_xp$ is
easily checked to be an embedding of transfomation systems. Assume that $\beta\supseteq \alpha$. Then $K:F(^{\alpha}\alpha, \A)\to F(^{\beta}\alpha, \A)$
defined by $K(f)x=f(x\upharpoonright \alpha)$ is an embedding, too.
These facts are fairly straightforward to establish
\cite[Theorems 3.1, 3.2]{DM}.
Call $F(^{\beta}\alpha, \D)$ a minimal functional dilation of $F(^{\alpha}\alpha, \D)$.
Elements of the big algebra, or the (cylindrifier free)
functional dilation, are of form ${\sf s}_{\sigma}p$,
$p\in F(^{\alpha}\alpha, \D)$ where $\sigma\upharpoonright \alpha$ is injective \cite[Theorems 4.3-4.4]{DM}.
Let $\B^{-c, -d}=F({}^{\mathfrak{n}}\alpha, \D).$
Let 
$\rho$ is any permutation such that $\rho\circ \sigma(\alpha)\subseteq \sigma(\alpha).$
For the $\PA$ case one defines cylindrifiers on $\B^{-c, -d}$ by setting for each $\Gamma\subseteq \mathfrak{n}:$
$${\sf c}_{(\Gamma)}{\sf s}_{\sigma}^{(\B^{-c, -d})}p={\sf s}_{\rho^{-1}}^{(\B^{-c, -d})} {\sf c}_{\rho(\{(\Gamma)\}\cap \sigma \alpha)}^{\D}{\sf s}_{(\rho\sigma\upharpoonright \alpha)}^{\D}p.$$
For the cases $\sf CPA$ case, one defines cylindrifiers on $\B^{-c, -d}$ by restricting $\Gamma$ to singletons, setting for each  $i\in  \mathfrak{n}:$
$${\sf c}_{i}{\sf s}_{\sigma}^{(\B^{-c, -d})}p={\sf s}_{\rho^{-1}}^{(\B^{-c, -d})} {\sf c}_{(\rho(i)\cap \sigma(\alpha))}^{\D}
{\sf s}_{(\rho\sigma\upharpoonright \alpha)}^{\D}p.$$
In both cases, the definition is sound, that is, it is independent of $\rho, \sigma, p$; furthermore, it agrees with the old cylindrifiers in $\D$.
Denote the resulting algebra by $\B^{-d}$.

When $\D\in \PA_{\alpha}$, identifying algebras with their transformation systems
we get that $\D\cong {\bf Nr}_{\alpha}\B^{-d}$, via the isomorphism $H$ defined
for $f\in \D$ and $x\in {}^{\mathfrak{n}}\alpha$ by,
$H(f)x=f(y)$ where $y\in {}^{\alpha}\alpha$ and $x\upharpoonright \alpha=y$,
\cite[Theorem 3.10]{DM}. In \cite[Theorems 4.3. 4.4]{DM} 
it is shown that $H(\D)={\bf Nr}_{\alpha}\B^{-d}$ where $\B^{-d}=\{\s_{\sigma}^{(\B^{-d})}p: p\in \D: \sigma\upharpoonright \alpha \text { is injective}\}.$
 When $\D\in {\sf CPA}_{\alpha}$, identifying $\D$ with $H(\D)$, where $H$ is defined like in the $\PA$ case, 
we get that $\D\subseteq \Nr_{\alpha}\B^{-d}$ with $\B^{-d}=\{\s_{\sigma}^{(\B^{-d})}p: p\in \D: \sigma\upharpoonright \alpha \text { is injective}\}.$
We show that $\Nr_{\alpha}\B^{-d}\subseteq \D$, so that $\D=\Nr_{\alpha}\B^{-d}$. 
Let $x\in \Nr_{\alpha}\B^{-d}$. Then there exist $y\in \D$ and $\sigma:\beta\to \beta$ with $\sigma\upharpoonright \alpha$ injective, such that 
$x=\s_{\sigma}^{(\B^{-d})}y$. Choose any $\tau:\beta\to \beta$ such that 
$\tau(i)=i$ for all $i\in \alpha$ and $(\tau\circ \sigma)(i)\in \alpha$ for all $i\in \alpha$. Such a $\tau$ clearly exists. Since $\Delta x\subseteq \alpha$, 
and $\tau$ fixes $\alpha$ pointwise, we have $\s_{\tau}^{(\B^{-d})}x=x$.
Then $x=\s_{\tau}^{(\B^{-d})}x=\s_{\tau}^{(\B^{-d})}\s_{\sigma}^{(\B^{-d})}y=\s_{\tau\circ \sigma}^{(\B^{-d})}y= \s_{\tau\circ \sigma\upharpoonright \alpha}^{\D}y\in \D$.
In all cases, having at hand $\B^{-d}$, for all 
$i<j<\mathfrak{n}$,  the diagonal element ${\sf d}_{ij}$ (in $\B^{-d}$) can be defined, using
the diagonal elements in $\D$, as in
\cite[Theorem 5.4.17]{HMT2}, obtaining the expanded required structure $\B$.  
The expanded structure $\B$ has Boolean reduct isomorphic to $F({}^\mathfrak{n}\alpha, \D)$. 
In particular, $\B$ is atomic (complete) if 
$\D$ is atomic (complete), 
because a product of an atomic (complete) Boolean algebras is atomic (complete).
\end{proof}
The proof of the following lemma follows from the definitions.
\begin{lemma}\label{join2} If $\A$, $\B$ and $\C$ are Boolean algebras, such that 
$\A\subseteq \B\subseteq \C$, $\B\subseteq_c \C$ and $\A\subseteq_c \C$, 
then $\A\subseteq_c \B$. 
\end{lemma}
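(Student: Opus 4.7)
The plan is to verify the defining condition of $\subseteq_c$ directly. Let $X\subseteq A$ be any subset for which the supremum $a:=\sum^{\A}X$ exists in $\A$; the goal is to show that $\sum^{\B}X$ exists in $\B$ and equals $a$. That is exactly what $\A\subseteq_c\B$ demands, so this single implication suffices.

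First I would apply the hypothesis $\A\subseteq_c\C$ to $X$: since $\sum^{\A}X=a$, also $\sum^{\C}X=a$. So the element $a$ is simultaneously the supremum of $X$ in $\A$ and in $\C$. The second step is to transfer this down to $\B$. Since $a\in A\subseteq B$, $a$ lies in $\B$, and it is certainly an upper bound of $X$ in $\B$ (the Boolean order of $\B$ is just the restriction of the order of $\C$). For minimality, take any $b\in B$ that bounds $X$ from above in $\B$. Because $\B\subseteq\C$, the same $b$ bounds $X$ in $\C$ as well, so $b\geq a$ in $\C$, and hence in $\B$. This shows that $a$ is the least upper bound of $X$ in $\B$, i.e.\ $\sum^{\B}X=a=\sum^{\A}X$, and since $X$ was arbitrary, $\A\subseteq_c\B$.

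I expect no real obstacle here: the argument is a brief chase through the definitions of complete subalgebra and of supremum. In fact the hypothesis $\B\subseteq_c\C$ does not seem to be used — only $\A\subseteq\B\subseteq\C$ together with $\A\subseteq_c\C$ enters the proof. If one insists on using $\B\subseteq_c\C$, an alternative phrasing is: once existence of $\sum^{\B}X$ is established (as above), the hypothesis $\B\subseteq_c\C$ forces $\sum^{\C}X=\sum^{\B}X$, which must then agree with $\sum^{\C}X=a$ coming from $\A\subseteq_c\C$. Either way, the extra hypothesis is presumably included because it describes the natural ambient setting — both $\A$ and $\B$ sitting as complete subalgebras of a common dilation $\C$ — rather than because the proof requires it.
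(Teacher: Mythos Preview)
Your proof is correct and is exactly the direct definition-chase that the paper has in mind; the paper itself offers no argument beyond the remark that the lemma ``follows from the definitions.'' Your observation that the hypothesis $\B\subseteq_c\C$ is not actually used is also valid.
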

For simplicity of notation, if $\beta\geq \omega$, $\B\in \PA_{\beta}(\PEA_{\beta})$, and $n<\omega$,  then we write $\Nr_n\B$ 
for $\Nr_n\Rd_{qa}\B$ $(\Nr_n\Rd_{qea}\B)$, where $\Rd_{qa}$ denotes `quasi-polyadic reduct'.

{\it In this section we understand complete representability for $\alpha$--dimensional algebras, $\alpha$ any ordinal,
in the classical sense with respect to generalized cartesian $\alpha$--dimensional spaces.}

It is shown in in \cite{au}, that for any infinite ordinal $\alpha$, if  $\A\in \PA_{\alpha}$ is atomic and completely additive, then it is completely representable. 
From this it can be concluded that for any $n<\omega$, 
any complete subalgebra of $\Nr_n\D$ is completely representable using lemma \ref{join}, 
because $\Nr_n\D\subseteq_c \D$ (as can be easily distilled from the next proof).
The result in \cite{au} holds for $\sf CPA$'s, cf. theorem \ref{paaa}, but it does not hold for $\PEA_{\omega}$s and $\sf CPEA_{\omega}$s.
It is not hard to construct atomic algebras in the last two classes that are not even representable, let alone completely representable. 
But for such (non--representable) algebras the $n$--neat reduct, for any $n<\omega$, 
will be completely representable 
as proved next (in theorems \ref{pa} and \ref{paa}):
\begin{theorem}\label{pa} If $2<n<\omega$ and $\D\in \PEA_{\omega}$ is atomic, then
any complete subalgebra of $\Nr_n\D$ is completely representable. In particular, 
${\bf Nr}_n\D$ is completely representable. 
\end{theorem}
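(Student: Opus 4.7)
The plan is to first reduce the theorem to representing the distinguished algebra ${\bf Nr}_n\D$, and then construct the representation from the atoms of $\D$ itself.

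For the reduction, Lemma \ref{join} says that complete representability of a $\PEA_n$ passes to its complete subalgebras. So it suffices to show ${\bf Nr}_n\D$ is completely representable: any complete subalgebra of $\Nr_n\D$ then inherits complete representability, once one checks (using Lemma \ref{join2}) that it sits as a complete subalgebra of ${\bf Nr}_n\D$. A preliminary verification is that ${\bf Nr}_n\D$ is atomic: given $0 < x \in {\bf Nr}_n\D$, pick a $\D$-atom $b \leq x$; then ${\sf c}_{(\omega \setminus n)} b$ belongs to ${\bf Nr}_n\D$, lies below $x$ (since ${\sf c}_{(\omega\setminus n)} x = x$), and is an atom of ${\bf Nr}_n\D$ by a routine calculation with the polyadic axioms and the fact that every element below ${\sf c}_{(\omega\setminus n)}b$ in ${\bf Nr}_n\D$ is either $0$ or itself equal to ${\sf c}_{(\omega\setminus n)} b$.

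The construction of the representation of ${\bf Nr}_n\D$ proceeds as follows. Build a base set $U$ from a maximal family of ``substitution-coherent tags'' of $\D$-atoms: intuitively, points of $U$ correspond to equivalence classes of $\omega$-sequences of $\D$-atoms mutually consistent under all polyadic substitutions $\s_\tau$ (for $\tau \in {}^\omega\omega$) and diagonal elements. Since $\D$ is atomic and lives in $\PEA_\omega$, each atom $a$ of $\D$ decomposes each image $\s_\tau a$ into a sum of atoms, and this orbit structure on $\At\D$ furnishes precisely the coherence conditions needed to define $U$. The representation $f: {\bf Nr}_n\D \to \wp({}^n U)$ sends $x$ to the set of $n$-tuples arising as restrictions of tagged $\omega$-sequences which witness some atom below $x$; the homomorphism conditions for cylindrifiers, substitutions, and diagonals are then standard calculations given the coherence built into $U$.

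The main obstacle is verifying completeness of $f$, i.e.\ $f(\sum X) = \bigcup_{x \in X} f(x)$ whenever $\sum X$ exists in ${\bf Nr}_n\D$. By the Hirsch--Hodkinson criterion recalled just before Lemma \ref{join}, this reduces to showing $\bigcup_{a \in \At({\bf Nr}_n\D)} f(a)$ covers the top of the representation, i.e.\ every tuple in ${}^n U$ arising from a tag witnesses some atom of ${\bf Nr}_n\D$. This in turn rests on the combinatorial fact that every partial tagging of $n$ coordinates extends to a full $\omega$-coherent tagging --- precisely where the availability of \emph{infinite} substitutions in $\PEA_\omega$ is essential, since they furnish the flexibility to extend any partial witness to a globally consistent sequence without violating atomicity. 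This extension step is where the combinatorics is most delicate and where atomicity of $\D$ (as opposed to mere representability) is crucially used.
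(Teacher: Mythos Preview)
Your reduction is inverted, and this is a genuine gap. You propose to first show ${\bf Nr}_n\D$ is completely representable and then deduce the result for an arbitrary $\A\subseteq_c\Nr_n\D$ by checking that $\A$ ``sits as a complete subalgebra of ${\bf Nr}_n\D$''. But the inclusion goes the wrong way: as the paper notes just before the theorem, ${\bf Nr}_n\D\subseteq \Nr_n\D$ (possibly strictly), not the reverse. So an $\A\subseteq_c\Nr_n\D$ need not be contained in ${\bf Nr}_n\D$ at all --- take $\A=\Nr_n\D$ itself --- and Lemma~\ref{join2} cannot rescue this. The paper does the opposite: it proves the general statement for any $\A\subseteq_c\Nr_n\D$ directly, and only at the very end observes that ${\bf Nr}_n\D\subseteq_c\D$ (hence ${\bf Nr}_n\D\subseteq_c\Nr_n\D$ by Lemma~\ref{join2}), so that ${\bf Nr}_n\D$ is a special case.

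Your construction sketch also omits the two ingredients that do the real work in the paper. First, the paper does not build the representation out of atoms of $\D$; it invokes Lemma~\ref{dilation} to dilate $\D$ to $\B\in\PEA_\beta$ for a suitably large cardinal $\beta$, and the base of the representation is $\beta/E$ for an equivalence $E$ on $\beta$ determined by diagonals lying in a chosen ultrafilter of $\B$. Second, the mechanism that makes cylindrifiers and completeness go through is a \emph{principal} ultrafilter $F$ of $\B$ generated by an atom below the given nonzero element, together with two explicit families of joins, $(\ast)$ ${\sf c}_{(\Gamma)}p=\sum\{{\sf s}_{\tau^+}p:\tau\}$ and $(\ast\ast)$ $\sum{\sf s}_{\tau^+}\At\A=1$, which are preserved because principal ultrafilters are isolated points in the Stone topology and hence avoid the associated nowhere dense sets. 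Your ``substitution-coherent tags'' and the claim that ``every partial tagging of $n$ coordinates extends to a full $\omega$-coherent tagging'' are gestures at what $(\ast)$ accomplishes, but without the dilation (which supplies enough witnesses for cylindrification) and the ultrafilter argument (which guarantees the joins survive), there is no mechanism to verify the cylindrifier clause or atomicity of the representation.
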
\begin{proof}
We identify notationally set algebras with their domain. 
Assume that  $\A\subseteq_c{\Nr}_n\D$, where $\D\in \PEA_{\omega}$ is atomic. 
We want to completely represent $\A$.  Let $c\in \A$ be non--zero. We will find  a homomorphism $f:\A\to \wp(^nU)$ 
such that $f(c)\neq 0$, and $\bigcup_{y\in Y}f(y)={}^nU$, whenever $Y\subseteq \A$ satisfies $\sum^{\A}Y=1$.
Assume for the moment  (to be proved in a while) that $\A\subseteq_c \D$. Then by lemma \ref{join} $\A$ is atomic, because $\D$. For brevity, let $X=\At\A$. 
Let $\mathfrak{m}$ be the local degree of $\D$, $\mathfrak{c}$ its effective cardinality 
and let $\beta$ be any cardinal such that $\beta\geq \mathfrak{c}$
and $\sum_{s<\mathfrak{m}}\beta^s=\beta$; such notions are defined in \cite{DM, au}.
We can assume by lemma \ref{dilation}, that $\D=\Nr_{\omega}\B$, with $\B\in \PEA_{\beta}$.  
For any ordinal $\mu\in \beta$, and $\tau\in {}^{\mu}\beta$,  write $\tau^+$ for $\tau\cup Id_{\beta\setminus \mu}(\in {}^\beta\beta$).
Consider the following family of joins evaluated in $\B$,
where $p\in \D$, $\Gamma\subseteq \beta$ and
$\tau\in {}^{\omega}\beta$:
(*) $ {\sf c}_{(\Gamma)}p=\sum^{\B}\{{\sf s}_{{\tau^+}}p: \tau\in {}^{\omega}\beta,\ \  \tau\upharpoonright \omega\setminus\Gamma=Id\},$ and (**):
$\sum {\sf s}_{{\tau^+}}^{\B}X=1.$
The first family of joins exists \cite[Proof of Theorem 6.1]{DM}, \cite{au}, and the second exists, 
because $\sum ^{\A}X=\sum ^{\D}X=\sum ^{\B}X=1$ and $\tau^+$ is completely additive, since
$\B\in \PEA_{\beta}$. 
The last equality of suprema follows from the fact that $\D=\Nr_{\omega}\B\subseteq_c \B$ and the first
from the fact that $\A\subseteq_c \D$. We prove the former, the latter is exactly the same replacing
$\omega$ and $\beta$, by $n$ and $\omega$, respectivey, proving that $\Nr_n\D\subseteq_c \D$, hence $\A\subseteq_c \D$.  
We prove that $\Nr_{\omega}\B\subseteq_c \B$. Assume that $S\subseteq \D$ and $\sum ^{\D}S=1$, and for contradiction, that there exists $d\in \B$ such that
$s\leq d< 1$ for all $s\in S$. Let  $J=\Delta d\setminus \omega$ and take  $t=-{\sf c}_{(J)}(-d)\in {\D}$.
Then  ${\sf c}_{(\beta\setminus \omega)}t={\sf c}_{(\beta\setminus \omega)}(-{\sf c}_{(J)} (-d))
=  {\sf c}_{(\beta\setminus \omega)}-{\sf c}_{(J)} (-d)
=  {\sf c}_{(\beta\setminus \omega)} -{\sf c}_{(\beta\setminus \omega)}{\sf c}_{(J)}( -d)
= -{\sf c}_{(\beta\setminus \omega)}{\sf c}_{(J)}( -d)
=-{\sf c}_{(J)}( -d)
=t.$
We have proved that $t\in \D$.
We now show that $s\leq t<1$ for all $s\in S$, which contradicts $\sum^{\D}S=1$.
If $s\in S$, we show that $s\leq t$. By $s\leq d$, we have  $s\cdot -d=0$.
Hence by ${\sf c}_{(J)}s=s$, we get $0={\sf c}_{(J)}(s\cdot -d)=s\cdot {\sf c}_{(J)}(-d)$, so
$s\leq -{\sf c}_{(J)}(-d)$.  It follows that $s\leq t$ as required. Assume for contradiction that 
$1=-{\sf c}_{(J)}(-d)$. Then ${\sf c}_{(J)}(-d)=0$, so $-d =0$ which contradicts that $d<1$. We have proved that $\sum^{\B}S=1$,
so $\D\subseteq_c \B$.
Let $F$ be any Boolean ultrafilter of $\B$ generated by an atom below $a$. We show that $F$
will preserve the family of joins in (*) and (**).
We use a simple topological argument  used by the author in \cite{au}. 
One forms nowhere dense sets in the Stone space of $\B$ 
corresponding to the aforementioned family of joins 
as follows:
The Stone space of (the Boolean reduct of) $\B$ has underlying set,  the set of all Boolean ultrafilters
of $\B$. For $b\in \B$, let $N_b$ be the clopen set $\{F\in S: b\in F\}$.
The required nowhere dense sets are defined for $\Gamma\subseteq \beta$, $p\in \D$ and $\tau\in {}^{\omega}\beta$ via:
$A_{\Gamma,p}=N_{{\sf c}_{(\Gamma)}p}\setminus \bigcup_{\tau:\omega\to \beta}N_{{\sf s}_{\tau^+}p}$, 
and $A_{\tau}=S\setminus \bigcup_{x\in X}N_{{\sf s}_{\tau^+}x}.$
The principal ultrafilters are isolated points in the Stone topology, so they lie outside the nowhere dense sets defined above.
Hence any such ultrafilter preserve the joins in (*) and (**). 
Fix a principal ultrafilter $F$ preserving (*) and (**) with $a\in F$. 
For $i, j\in \beta$, set $iEj\iff {\sf d}_{ij}^{\B}\in F$.
Then by the equational properties of diagonal elements and properties of filters, it is easy to show that $E$ is an equivalence relation on $\beta$.
Define $f: \A\to \wp({}^n(\beta/E))$, via $x\mapsto \{\bar{t}\in {}^n(\beta/E): {\sf s}_{t\cup Id_{\beta\sim n}}^{\B}x\in F\},$
where $\bar{t}(i/E)=t(i)$ ($i<n$) and $t\in {}^n\beta$. 
We show that  $f$ is a well--defined homomorphism (from (*)) and that $f$ is complete 
such that $f(c)\neq 0$. The last follows by observing that $Id\in f(c)$.
Let $V={}^\beta\beta^{(Id)}$. To show that $f$ is well defined, it suffices to show  that for all $\sigma, \tau\in V$, 
if $(\tau(i), \sigma(i))\in E$ for all $i\in \beta$,   then 
for any $x\in \A$, ${\sf s}_{\tau}x\in F\iff {\sf s}_{\sigma}x\in F.$  
We proceed by by induction on
$|\{i\in \beta: \tau(i)\neq \sigma(i)\}| (<\omega)$.  
If $J=\{i\in \beta: \tau(i)\neq \sigma(i)\}$ is empty, the result is obvious. 
Otherwise assume that $k\in J$. 
We introduce a helpful piece of notation.
For $\eta\in V$, let  
$\eta(k\mapsto l)$ stand for the $\eta'$ that is the same as $\eta$ except
that $\eta'(k)=l.$ 
Now take any 
$\lambda\in  \{\eta\in \beta: (\sigma){^{-1}}\{\eta\}= (\tau){^{-1}}\{\eta\}=
\{\eta\}\}\smallsetminus \Delta x.$
Recall that $\Delta x=\{i\in \beta: {\sf c}_ix\neq x\}$ and that $\beta\setminus \Delta x$
is infinite because $\Delta x\subseteq n$, so such a $\lambda$ exists. Now 
we freely use properties of substitutions for cylindric algebras.
We have by \cite[1.11.11(i)(iv)]{HMT2}
(a) ${\sf s}_{\sigma}x={\sf s}_{\sigma k}^{\lambda}{\sf s}_{\sigma(k\mapsto \lambda)}x,$
and (b)
${\sf s}_{\tau k}^{\lambda}({\sf d}_{\lambda, \sigma k}\cdot  {\sf s}_{\sigma} x)
={\sf d}_{\tau k, \sigma k} {\sf s}_{\sigma} x,$
and (c)
${\sf s}_{\tau k}^{\lambda}({\sf d}_{\lambda, \sigma k}\cdot {\sf s}_{\sigma(k\mapsto \lambda)}x)
= {\sf d}_{\tau k,  \sigma k}\cdot {\sf s}_{\sigma(k\mapsto \tau k)}x,$
and finally (d)
${\sf d}_{\lambda, \sigma k}\cdot {\sf s}_{\sigma k}^{\lambda}{\sf s}_{{\sigma}(k\mapsto \lambda)}x=
{\sf d}_{\lambda, \sigma k}\cdot {\sf s}_{{\sigma}(k\mapsto \lambda)}x.$
Then by (b), (a), (d) and (c), we get,
\begin{align*}
{\sf d}_{\tau k, \sigma k}\cdot {\sf s}_{\sigma} x
&=  {\sf s}_{\tau k}^{\lambda}({\sf d}_{\lambda,\sigma k}\cdot {\sf s}_{\sigma}x)\\
&={\sf s}_{\tau k}^{\lambda}({\sf d}_{\lambda, \sigma k}\cdot {\sf s}_{\sigma k}^{\lambda}
{\sf s}_{{\sigma}(k\mapsto \lambda)}x)\\
&={\sf s}_{\tau k}^{\lambda}({\sf d}_{\lambda, \sigma k}\cdot {\sf s}_{{\sigma}(k\mapsto \lambda)}x)\\
&= {\sf d}_{\tau k,  \sigma k}\cdot {\sf s}_{\sigma(k\mapsto \tau k)}x.
\end{align*}
But $F$ is a filter and $(\tau k,\sigma k)\in E$, we conclude that
$${\sf s}_{\sigma}x\in F \iff{\sf s}_{\sigma(k\mapsto \tau k)}x\in F.$$
The conclusion follows from the induction hypothesis.
We have proved that $f$ is well defined.
We now check that $f$ is a homomorphism, i.e. it 
preserves the operations.
For $\sigma\in {}^{n}\beta$, recall that 
$\sigma^+$ denotes $\sigma \cup Id_{{\beta}\sim n}\in {}^{\beta}\beta^{(Id)}.$ 
\begin{itemize}
\item Boolean operations:  Since $F$ is maximal we have  
$\bar{\sigma}\in f(x+y)\iff {\sf s}_{\sigma^+}(x+y)\in F\iff {\sf s}_{\sigma^+}x+{\sf s}_{\sigma^+}y\in F\iff 
{\sf s}_{\sigma^+} x \text { or } {\sf s}_{\sigma^+} y\in F\iff  
\bar{\sigma}\in f(x)\cup f(y).$
We now check complementation.
$$\bar{\sigma} \in f(-x) \iff {\sf s}_{\sigma^+}(-x)\in F \iff-{\sf s}_{\sigma^+x}\in F 
\iff {\sf s}_{\sigma^+} x\notin F \iff \bar{\sigma}\notin f(x).$$

\item Diagonal elements: Let $k,l<n$. Then we have:   
$\sigma\in f{\sf d}_{kl}\iff {\sf s}_{\sigma^{+}}{\sf d}_{kl}\in F \iff
{\sf d}_{\sigma k, \sigma l}\in F
\iff (\sigma k, \sigma l)\in E \iff
\sigma k/E=\sigma l/E \iff 
\bar\sigma(k)=\bar\sigma(l)
\iff\bar{\sigma} \in {\sf d}_{kl}.$

\item Cylindrifications: Let $k<n$ and $a\in A$.  Let $\bar{\sigma}\in {\sf c}_kf(a)$. 
Then for some $\lambda\in \beta$, we have
$\bar{\sigma}(k \mapsto  \lambda/E)\in f(a)$ 
hence 
${\sf s}_{\sigma^+(k\mapsto \lambda)}a\in F$. 
It follows from  the inclusion $a\leq {\sf c}_ka$ that 
${\sf s}_{\sigma^+(k\mapsto \lambda)}{\sf c}_ka \in F$,
so ${\sf s}_{\sigma^+}{\sf c}_ka\in F.$
Thus ${\sf c}_kf(a)\subseteq f({\sf c}_ka.)$

We prove the other more difficult inclusion that uses the condition (*) of eliminating cylindrifiers.
Let $a\in A$ and $k<n$. Let $\bar\sigma'\in f{\sf c}_ka$ and let $\sigma=\sigma'\cup Id_{\beta\sim n}$. Then 
${\sf s}_{\sigma}^{\B}{\sf c}_ka={\sf s}_{\sigma'}^{\B}{\sf c}_ka\in F.$
Pick $\lambda\in \{\eta \in \beta:  \sigma^{-1}\{\eta\}=\{\eta\}\}\smallsetminus \Delta a$, such a $\lambda$ exists because $\Delta a$ is finite, and 
$|\{i\in \beta:\sigma(i)\neq i\}|<\omega.$
Let $\tau=\sigma\upharpoonright \mathfrak{n}\smallsetminus \{k,\lambda\}\cup \{
(k,\lambda), (\lambda,k)\}.$
Then (in $\B$):
$${\sf c}_{\lambda}{\sf s}_{\tau}a={\sf s}_{\tau}{\sf c}_ka={\sf s}_{\sigma}{\sf c}_ka\in F.$$
By the construction of $F$, there is some $u(\notin \Delta({\sf s}_{\tau}^{\B}a))$  
such that ${\sf s}_{u}^{\lambda}{\sf s}_{\tau}a\in F,$ so ${\sf s}_{\sigma(k\mapsto u)}a\in F.$
Hence $\sigma(k\mapsto u)\in f(a),$ from which we 
get that  $\bar{\sigma}'\in {\sf c}_k f(a)$.

\item Substitutions: Direct since substitution operations are Boolean endomorphisms

\end{itemize}

We show that the non--zero homomorphism 
$f$ is an atomic, hence, a complete representation.
 By construction, for every $s\in {}^{n}(\beta/E)$, 
there exists $x\in X(=\At\A)$, such that ${\sf s}_{s\cup Id_{\beta\sim n}}^{\B}x\in F$, 
from which we get the required, namely, that  
$\bigcup_{x\in X}f(x)={}^n(\beta/E).$
The complete representability of ${\bf Nr}_n\D$ follows from lemmata, \ref{join}, 
\ref{join2}, by observing that  ${\bf Nr}_n\D\subseteq_c \D$, hence ${\bf Nr}_n\D\subseteq_c \Nr_n\D$. 
\end{proof}
For $\CPEA$s, we have a slightly weaker result:
\begin{theorem}\label{paa} If $n<\omega$ and  $\D\in {\sf CPEA}_{\omega}$ is atomic,  then any complete subalgebra 
of $\Nr_n\Cm\At\D$  is completely representable. In particular, if $\D$ is complete and atomic, then $\Nr_n\D$ is completely representable.
\end{theorem}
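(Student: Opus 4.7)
The argument follows the structure of Theorem \ref{pa}, with $\C:=\Cm\At\D$ in the role of the complete atomic base and a suitable dilation $\B$ of $\C$ replacing the dilation of $\D$. Since $\C$ is a complete atomic $\CPEA_\omega$, Lemma \ref{dilation} yields a complete atomic $\B\in \CPEA_\beta$ with $\C=\Nr_\omega\B$, for $\beta$ chosen large enough relative to the local degree and effective cardinality of $\C$, as in Theorem \ref{pa}. The first task is to obtain $\A\subseteq_c \B$, and this is done along the chain
\[
\A\subseteq_c \Nr_n\C \subseteq_c \C \subseteq_c \B,
\]
together with Lemma \ref{join2}. The two non-trivial inclusions $\Nr_n\C\subseteq_c \C$ and $\C\subseteq_c \B$ are exact analogues of the $\Nr_\omega\B\subseteq_c \B$ step in Theorem \ref{pa}; that step was phrased in terms of infinite cylindrifications $\c_{(J)}$, which are absent from the $\CPEA$ signature, but they are recovered in the complete algebras $\C$ and $\B$ by setting $\c_{(J)}p:=\bigvee\{\c_F p: F\subseteq J \text{ finite}\}$, and one checks that these suprema satisfy the cylindric identities needed to run the original argument verbatim.

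Once $\A\subseteq_c \B$ is established, Lemma \ref{join} gives that $\A$ is atomic. Fix $c\in\A$ nonzero, pick a $\B$-atom $a\leq c$, and let $F$ be the principal ultrafilter of $\B$ generated by $a$. In contrast with Theorem \ref{pa}, no topological nowhere-dense argument is needed: because $F$ is generated by an atom of the atomic algebra $\B$, it automatically preserves every existing supremum in $\B$. In particular $F$ preserves the single-index elimination identities
\[
\c_i q = \bigvee\{\s_u^i q: u\in \beta\setminus\Delta q\}\qquad (q\in\B,\; i<\beta,\; |\Delta q|<\omega),
\]
which hold in $\B$ thanks to completeness and the size of $\beta$, together with the atomicity joins $\sum \s_{\tau^+}^\B\At\A=1$ for $\tau\in {}^n\beta$, which transfer from $\A$ to $\B$ via the complete embeddings and the complete additivity of substitutions in the functional dilation.

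With $F$ in hand, define $iEj\iff {\sf d}_{ij}^\B\in F$ and
\[
f:\A\to \wp({}^n(\beta/E)),\qquad f(x)=\{\bar t\in {}^n(\beta/E): \s_{t\cup Id_{\beta\setminus n}}^\B x\in F\}.
\]
The verification that $f$ is a well-defined homomorphism preserving Boolean operations, diagonals, cylindrifiers $\c_k$ ($k<n$), and finite substitutions is identical to the corresponding step in the proof of Theorem \ref{pa}; only finite substitution identities and the single-index elimination above are used, all of which are available in $\CPEA_\beta$. Atomicity of $f$ follows from preservation of the join $\sum\s_{\tau^+}\At\A=1$: for every $\bar s\in {}^n(\beta/E)$ there is some $x\in\At\A$ with $\s_{s\cup Id}^\B x\in F$, whence $\bar s\in f(x)$; and $f(c)\neq 0$ is witnessed by $Id\in f(c)$. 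Hence $f$ is a complete representation of $\A$. The ``in particular'' clause is the special case obtained by taking $\C=\D$ (bypassing $\Cm\At\D$) when $\D$ is itself complete and atomic, together with $\A=\Nr_n\D$, which is trivially a complete subalgebra of itself.

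The principal obstacle is verifying in detail that every use of infinite cylindrifications in Theorem \ref{pa}---both in the complete-embedding steps and in the single-index elimination inside the representation---can be replaced by suprema of finite cylindrifications in the complete algebras $\C$ and $\B$. Beyond this bookkeeping, the argument is a direct transcription of the proof of Theorem \ref{pa}.
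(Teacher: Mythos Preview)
Your proposal is essentially correct and follows the paper's own route: pass to $\D^*=\Cm\At\D$, prove $\Nr_n\D^*\subseteq_c\D^*$ using completeness of $\D^*$ in place of infinitary cylindrifiers, dilate to a complete atomic $\B\in\CPEA_\beta$, establish the single--index elimination join (*) ${\sf c}_kx=\sum_{l\in\beta}{\sf s}_l^kx$ and the atomicity join (**), pick a principal ultrafilter below the given nonzero element, and define the representation via $\beta/E$ exactly as in Theorem~\ref{pa}.

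The one point where you diverge from the paper is the choice of $\beta$: you invoke the local degree/effective cardinality condition from Theorem~\ref{pa}, but that condition was tailored to the $\PEA$ setting with genuine infinitary cylindrifiers. In the $\CPEA$ case the paper instead takes $\beta$ to be a \emph{regular} cardinal strictly larger than $|\D^*|$, and this is precisely what drives the proof of (*): given an upper bound $y\in\B$ one must find $l\in\beta\setminus(\Delta x\cup\Delta y)$, and regularity of $\beta$ together with $|\Delta x|,|\Delta y|<\beta$ guarantees this. Your phrase ``thanks to completeness and the size of $\beta$'' glosses over exactly this step. Apart from that, your recovery of ${\sf c}_{(J)}$ as $\bigvee\{{\sf c}_F:F\subseteq J\text{ finite}\}$ in the complete algebras is equivalent to the paper's explicit element $t=-\bigwedge_{i\in\omega\setminus n}(-{\sf c}_i-)d$, and the rest of your outline matches the paper's argument.
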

\begin{proof}
Let $\D\in {\sf CPEA}_{\omega}$ be atomic. Let $\D^*=\Cm\At\D$. Then $\D^*$ is complete and atomic and $\Nr_n\D^*\subseteq_c \D^*$.
To prove the last $\subseteq_c$, assume for contradiction that 
there is some $S\subseteq \Nr_{n}\D^*$,  $\sum ^{\Nr_{n}\D^*}S=1$, and there exists $d\in \D^*$ such that
$s\leq d< 1$ for all $s\in S$. Take $t=-\bigwedge_{i\in \omega\setminus n}(-{\sf c}_i-)d$.
This infimum is well defined because $\D^*$ is complete. Like in the previous proof it can be proved that ${\sf c}_it=t$ for all $i\in \omega\setminus n$,
hence $t\in \Nr_{n}\D^*$
 and that $s\leq t<1$ for all $s\in S$, 
which contradicts that $\sum ^{\Nr_{n}\D^*}S=1$.
Let $\beta$ be a regular cardinal $>|\D^*|$ and by lemma \ref{dilation}, 
let $\B\in {\sf CPEA}_{\beta}$ be   complete and atomic such that $\D^*=\Nr_{\omega}\B$. 
Then we have the following chain of complete embeddings:
 $\Nr_n\D^*\subseteq_c \D^*=\Nr_{\omega}{\B}\subseteq_c \B$; the last $\subseteq_c$ follows like above using that $\B$ is complete.
From the first $\subseteq_c$, since $\D^*$ is atomic, we get by lemma \ref{join}, 
that $\Nr_n\D^*$ is atomic. Let $X=\At\Nr_n\D^*$. 
Then also from the first $\subseteq_c$, we get that $\sum^{\D^*}X=1$, so $\sum^{\B}X=1$ because $\D^*\subseteq_c \B$. 
For $k<\beta$, $x\in \D^*$ and $\tau\in {}^\omega\beta$,
the following joins hold in $\B$: 
(*) ${\sf c}_kx=\sum_{l\in \beta}^{\B}{\sf s}_l^kx$ and (**) $\sum {\sf s}_{\tau^{+}}^{\B}X=1$, where 
$\tau^+=\tau\cup Id_{\beta\setminus \omega}(\in {}^\beta\beta)$. The join (**) holds, because $\s_{\tau^{+}}^{\B}$ is completely additive, since $\B$ is completely additive.
To prove (*), fix $k<\beta$. Then for all $l\in \beta$, we have ${\sf s}_l^kx\leq {\sf c}_kx$.
Conversely, assume that $y\in \B$ is an upper bound for $\{{\sf s}_l^kx: l\in \beta\}$. 
Let $l\in \beta\setminus (\Delta x\cup \Delta y)$; such an $l$ exists, because $|\Delta x|<\beta$, $|\Delta y|<\beta$ and $\beta$ is regular.
Hence, we get that ${\sf c}_lx=x$ and ${\sf c}_ly=y$.  But then ${\sf c}_l{\sf s}_l^kx\leq y$, and so ${\sf c}_kx\leq y$.
We have proved that (*) hold. 
Let $\A=\Nr_n\D^*$. Let  $a\in \A$ be non--zero. We want to find a complete representation $f:\A\to \wp(V)$ ($V$ a unit 
of a $\sf Gs_{n}$, i.e a disjoint 
union of cartesian spaces)  such that $f(a)\neq 0$.  
Let $F$ be any Boolean ultrafilter of $\B$ generated by an atom below $a$. 
Then, like in the proof of theorem \ref{pa}, $F$
will preserve the family of joins in (*) and (**).
Next we proceed exactly like in the proof of theorem \ref{pa}. 
For $i, j\in \beta$, set $iEj\iff {\sf d}_{ij}^{\B}\in F$.
Then $E$ is an equivalence relation on $\beta$.
Define $f: \A\to \wp({}^n(\beta/E))$, via $x\mapsto \{\bar{t}\in {}^n(\beta/E): {\sf s}_{t\cup Id}^{\B}x\in F\},$
where $\bar{t}(i/E)=t(i)$ and $t\in {}^n\beta$. 
Then $f$ is well--defined, a homomorphism (from (*)) and atomic (from (**)).  Also
$f(a)\neq 0$ because $a\in F$, so $Id\in f(a)$.
\end{proof}

\begin{theorem}\label{paaa} If $\D\in \sf CPA_{\omega}$ is atomic and completely additive, then it is completely representable
\end{theorem}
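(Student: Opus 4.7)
The plan is to adapt the argument of Theorem \ref{pa} to the $\sf CPA$ setting, which is notationally simpler in two respects: there are no diagonal elements (so no equivalence relation on indices is needed to make the representation well--defined), and cylindrifications are only finitary (so the elimination of cylindrifiers reduces to substitutions of the form $\s^k_l$ rather than infinite families ${\sf s}_{\tau^+}$). The cost is that we can no longer rely on the infinitary cylindrifier ${\sf c}_{(\Gamma)}$ available in $\sf PA_\beta$, so the role of that operator must be played by complete additivity of the (infinitary) substitutions in $\D$.

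First I would apply Lemma \ref{dilation} to dilate $\D$ to an atomic $\B\in {\sf CPA}_\beta$ with $\D={\bf Nr}_\omega\B$, where $\beta$ is a regular cardinal with $\beta>|\D|$ and $\sum_{s<\mathfrak m}\beta^s=\beta$, $\mathfrak m$ being the local degree of $\D$. Because the dilation constructed in that lemma has Boolean reduct isomorphic to a product, atomicity and complete additivity transfer from $\D$ to $\B$. Next I would verify $\Nr_\omega\B\subseteq_c\B$: given $S\subseteq\Nr_\omega\B$ with $\sum^{\Nr_\omega\B}S=1$ and a putative upper bound $d\in\B$ with $d<1$, the functional--dilation description of $\B$ represents $d$ as ${\sf s}_\sigma^{\B}p$ for some $p\in\D$ with $\sigma\restr \omega$ injective, and complete additivity combined with the $\Delta$--calculus lets me pull down to an element $t\in\Nr_\omega\B$ with $s\leq t<1$ for all $s\in S$, contradicting the supremum. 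This, together with atomicity of $\D$, yields the two families of joins needed in $\B$: for $X=\At\D$, $k<\beta$, $x\in\D$ and $\tau\in{}^{\omega}\beta$,
\[
{\sf c}_k x=\sum{}^{\B}_{l\in\beta}{\sf s}^k_l x,\qquad
\sum{}^{\B}{\sf s}_{\tau^+}^{\B}X=1,
\]
where $\tau^+=\tau\cup Id_{\beta\setminus\omega}$. The first join follows from regularity of $\beta$ together with the fact that $\Delta x$ is small compared to $\beta$; the second is exactly where complete additivity of $\B$ is used, combined with $\sum^{\B}X=1$ (which follows from $\D\subseteq_c\B$).

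To represent, fix a non--zero $a\in\D$, form nowhere dense sets $N_{{\sf c}_kx}\setminus\bigcup_l N_{{\sf s}^k_lx}$ and $S\setminus\bigcup_{x\in X}N_{{\sf s}_{\tau^+}x}$ in the Stone space of $\B$, and pick by the Baire category argument used in the proof of Theorem \ref{pa} a principal ultrafilter $F$ generated by an atom below $a$ which lies outside all these sets. Define $f:\D\to\wp({}^{\omega}\beta)$ by $f(x)=\{t\in{}^{\omega}\beta:{\sf s}_{t^+}^{\B}x\in F\}$; no equivalence relation on $\beta$ is required since there are no diagonals. Well--definedness is immediate, and $f$ is a homomorphism: Boolean operations are handled by maximality of $F$, cylindrifications by the first family of joins above exactly as in the proof of Theorem \ref{pa}, and substitutions because ${\sf s}_\sigma^{\B}$ commutes with the transformation--system action. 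Atomicity of $f$ follows from the second family of joins, and $f(a)\neq 0$ because $Id\in f(a)$. The main obstacle, as anticipated, is the complete--embedding step $\Nr_\omega\B\subseteq_c\B$: in the $\PEA$ case it was effected by the infinitary cylindrifier ${\sf c}_{(\beta\setminus\omega)}$ and in the $\CPEA$ case by passing to the Dedekind--MacNeille completion, whereas here complete additivity of $\B$ (which is the sole extra hypothesis in the statement) is precisely what one needs to push through the argument without either of those tools.
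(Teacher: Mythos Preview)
Your overall strategy is close to the paper's, but you diverge at precisely the crucial step, and your justification there does not go through as written.

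The paper does \emph{not} dilate $\D$ directly. It first passes to the Dedekind--MacNeille completion $\D^*=\Cm\At\D$ (noting that $\D$ is completely representable iff $\D^*$ is, since they share the same atom structure), and only then forms a complete atomic dilation $\B$ of $\D^*$ in a regular $\beta>|\D^*|$ via Lemma~\ref{dilation}. The point of the detour is that completeness transfers to $\B$, and completeness of $\B$ is exactly what is used to get $\D^*=\Nr_\omega\B\subseteq_c\B$: one takes $t=-\bigwedge_{i\in\beta\setminus\omega}(-{\sf c}_i-)d$, an infinite meet available only because $\B$ is complete. So the $\sf CPA$ proof uses the very completion trick you say is reserved for the $\sf CPEA$ case. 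The rest of the paper's argument (the two families of joins, the principal ultrafilter, the map $d\mapsto\{\tau:{\sf s}_{\tau^+}d\in F\}$) is then essentially what you wrote.

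Your attempt to bypass the completion is where the gap lies. You propose to ``pull $d={\sf s}_\sigma^{\B}p$ down'' to some $t\in\Nr_\omega\B$ with $s\le t<1$. The natural candidate is $t={\sf s}_\tau d$ with $\tau$ fixing $\omega$ pointwise and $(\tau\circ\sigma)(\omega)\subseteq\omega$; this does land in $\D$ and dominates each $s$. But such a $\tau$ cannot be a bijection (it must send $\sigma(\omega)\setminus\omega$ into $\omega$ while fixing $\omega$), so ${\sf s}_\tau$ is not injective and $d<1$ need not give $t<1$. Complete additivity concerns preservation of existing joins, not the manufacture of such a projection, so invoking it here does not help. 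There \emph{is} a direct argument that avoids the completion---work pointwise in the functional model: if $b\in\B\subseteq F({}^\beta\omega,\D)$ bounds $S\subseteq\D$ with $\sum^{\D}S=1$, then for each $y$ the value $b(y)$ bounds $\{{\sf s}_{y\restr\omega}^{\D}s:s\in S\}$, whose $\D$--join is $1$ by complete additivity of ${\sf s}_{y\restr\omega}^{\D}$; hence $b(y)=1$ for all $y$ and $b=1$. If you want to keep your route, this is the argument to make explicit. (Minor: in $\sf CPA$ there are no infinitary cylindrifiers, so the relevant operator is $\Nr$, not ${\bf Nr}$; see the last sentence of Lemma~\ref{dilation}.)
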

\begin{proof} Replace $\D$ by its \de\ completion $\D^*=\Cm\At\D$. Then $\D$ is completely representable $\iff$ $\D^*$ is completely representable 
and furthermore $\D^*$ is complete.
It suffices thus to show that $\D^*$  is completely representable. 
One forms an atomic complete dilation $\B$ of $\D^*$ to a {\it regular cardinal $\beta>|\D^*|$} exactly as in lemma \ref{dilation}. 
For $\tau\in {}^{\omega}\beta$, let $\tau^+=\tau\cup Id_{\beta\setminus \omega}$.
Then like before $\D^*\subseteq_c \B$ and so    
the following family  of joins hold in $\B$: For all $i<\beta$, 
$b\in \B$  ${\sf c}_ib=\sum_{j\in \beta} {\sf s}_j^ix$
and for all $\tau\in {}^{\omega}\beta$, $\sum {\sf s}_{{\tau}^+}\At\D^*=1$. Let $a\in \D^*$ be non zero. 
Take any ultrafilter $F$ in the Stone space of $\B$ generated by an atom below $a$. 
Then   $f:\D^*\to \wp(^{\omega}\beta)$ defined 
via $d\mapsto \{\tau\in {}^{\omega}\beta: \s_{\tau^+}^{\B}d\in F\}$ 
is a complete representation of $\D^*$ such that $f(a)\neq 0$. 
\end{proof}

If the dilations are in $\QEA_{\omega}$ (an $\omega$ dimensional quasi--polyadic equality algebra) we have a weaker result. We do not know whether the result proved for $\PEA_{\omega}$ holds
when the $\omega$--dilation is an atomic $\QEA_{\omega}$. Entirely analogous results hold if we replace $\QEA_{\omega}$ by $\QA_{\omega}$.

\begin{theorem} \label{suf} Let $n<\omega$. Let $\D\in \QEA_{\omega}$ be atomic. 
Assume that  for all $x\in \D$ for all $k<\omega$, 
${\sf c}_kx=\sum_{l\in \omega}{\sf s}_l^kx$. 
If $\A\subseteq \Nr_{n}\D$  such that $\A\subseteq_c \D$ (this is stronger than $\A\subseteq_c \Nr_n\D$), 
then $\A$ is completely representable. 
\end{theorem}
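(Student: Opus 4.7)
The plan is to mimic the proof of Theorem \ref{pa} but \emph{without} passing to a larger dilation, since $\QEA_\omega$ lacks the infinitary substitutions required by Lemma \ref{dilation}. The strengthened hypothesis $\A\subseteq_c \D$ (rather than merely $\A\subseteq_c \Nr_n\D$) is the substitute for that dilation step, and the additional cylindrifier hypothesis ${\sf c}_kx=\sum_{l}{\sf s}_l^k x$ is what replaces the family of joins (*) that was automatic in the $\PEA_\omega$ case.

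First I would observe that by Lemma \ref{join}, since $\A\subseteq_c\D$ and $\D$ is atomic, $\A$ is atomic. Setting $X=\At\A$, I get $\sum^{\A}X=1$ and hence $\sum^{\D}X=1$ by $\A\subseteq_c\D$. Given non-zero $a\in\A$, I would pick an atom $b$ of $\D$ with $b\leq a$ and let $F=\{x\in\D:b\leq x\}$ be the principal ultrafilter generated by $b$. The decisive property of $F$ is that, being generated by an atom, it automatically preserves \emph{every} supremum that already exists in $\D$: if $\sum S=c$ exists in $\D$ and $c\in F$, then $b\leq s$ for some $s\in S$, since otherwise $s\leq -b$ for all $s\in S$ and hence $c\leq -b$. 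Thus $F$ preserves both of the families of joins I need, namely ${\sf c}_kx=\sum_{l<\omega}{\sf s}_l^kx$ (given) and $\sum^{\D}{\sf s}_\tau X=1$ for $\tau:\omega\to\omega$ of finite support (see below).

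I would then set $iEj\iff{\sf d}_{ij}^{\D}\in F$, verify $E$ is an equivalence on $\omega$ using the axioms for diagonals and the fact that $F$ is a filter, and define
\[
f:\A\to\wp({}^n(\omega/E)),\qquad f(x)=\{\bar t\in{}^n(\omega/E):{\sf s}_{t\cup Id_{\omega\setminus n}}^{\D}x\in F\},
\]
where $\bar t(i)=t(i)/E$. The verifications that $f$ is well-defined, that $f$ preserves Boolean operations, diagonals, and substitutions, and that $f(a)\neq 0$ via $Id\in f(a)$, are copied essentially verbatim from the proof of Theorem \ref{pa}. The cylindrifier clause uses the hypothesis ${\sf c}_kx=\sum_{l}{\sf s}_l^kx$: for $\bar\sigma'\in f({\sf c}_kx)$, choose a fresh $\lambda\in\omega\setminus\Delta x$ (available since $x\in\A\subseteq\Nr_n\D$ forces $\Delta x\subseteq n$), conjugate by $[k,\lambda]$ using the ${\sf s}_{[i,j]}$ operators of $\QEA_\omega$, and extract a witness $l$ with ${\sf s}_{\sigma^+(k\mapsto l)}^{\D}x\in F$ from the preserved join.

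The main obstacle, and the reason the result is weaker than Theorem \ref{pa}, is the atomicity clause: I must know that for every $\bar t\in{}^n(\omega/E)$ there is some $y\in X$ with ${\sf s}_{t\cup Id}^{\D}y\in F$, which reduces to verifying $\sum^{\D}{\sf s}_\tau X=1$ for every finite-support substitution $\tau$. Transpositions ${\sf s}_{[i,j]}$ are Boolean automorphisms and so automatically preserve this sum, so I would decompose $\tau$ into a composition of transpositions and replacements $\sub{i}{j}$, and handle each replacement via $\sub{i}{j}x=\cyl{i}(x\cdot\diag{i}{j})$ together with distributivity of $\cdot\,\diag{i}{j}$ over existing sups and the hypothesis ${\sf c}_ix=\sum_{l}{\sf s}_l^ix$ evaluated at elements of $X\cdot\diag{i}{j}$. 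Combining with $\sum^{\D}X=1$ gives $\sum^{\D}{\sf s}_\tau X=1$; atomicity of $F$ then yields the witness, so $\bigcup_{y\in X}f(y)={}^n(\omega/E)$ and $f$ is the required complete representation.
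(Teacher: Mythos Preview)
Your proposal follows the paper's proof almost exactly: pick a principal ultrafilter $F$ of $\D$ generated by an atom below $a$, observe it preserves all existing joins in $\D$, and build the representation on ${}^n(\omega/E)$ via $f(x)=\{\bar t:{\sf s}^{\D}_{t\cup Id}x\in F\}$. The two families of joins you need are the cylindrifier joins (*) (given as a hypothesis) and the joins (**) $\sum^{\D}{\sf s}_\tau X=1$ for finite-support $\tau$; this is precisely what the paper does.

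The one place you diverge from the paper is your justification of (**). The paper disposes of it in one line: ``the ${\sf s}_\tau$s are completely additive,'' so $\sum{\sf s}_\tau X={\sf s}_\tau\sum X={\sf s}_\tau 1=1$. This holds in $\QEA_\omega$ because transpositions are Boolean automorphisms, while for $i\neq j$ one has $\sub{i}{j}x={\sf c}_i(x\cdot{\sf d}_{ij})$, and ${\sf c}_i$ is self-conjugate (from ${\sf c}_ix\cdot y=0\iff x\cdot{\sf c}_iy=0$) hence completely additive. Your route---decomposing $\tau$ and invoking the cylindrifier hypothesis ${\sf c}_ix=\sum_l{\sf s}_l^ix$ on elements of $X\cdot{\sf d}_{ij}$---does not close the argument as written: that hypothesis expresses each ${\sf c}_iy$ as a join, but it does not by itself let you commute ${\sf c}_i$ past the join $\sum_{x\in X}(x\cdot{\sf d}_{ij})$. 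You still need complete additivity of ${\sf c}_i$ for that step, at which point the hypothesis is irrelevant to (**) and is used only where it belongs, in the cylindrifier clause of the homomorphism check. So your proof is correct in outline, but tighten the (**) step to the paper's one-liner.
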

\begin{proof} First observe that $\A$ is atomic, because $\D$ is atomic and $\A\subseteq_c \D$.
Accordingly, let $X=\At\A$. Let $a\in \A$ be non-zero. 
Like  before, one finds a principal ultrafilter $F$ such that $a\in F$ and $F$ preserves the family of joins ${\sf c}_{i}x=\sum^{\D}_{j\in \beta} {\sf s}_j^i x$, 
and $\sum {\sf s}^{\D}_{\tau}X=1$, where $\tau:\omega\to \omega$ is a finite transformation; that is $|\{i\in \omega: \tau(i)\neq i\}| <\omega$. 
The first family of joins exists by assumption, the second exists, since $\sum^{\D}X=1$ by 
$\A\subseteq_c \D$ and the $\s_{\tau}$s are completely additive.   Any principal ultrafilter $F$ generated by an atom below $a$ will do, as shown in the previous 
proof.  Again as before, the selected $F$ gives the required complete representation $f$ of $\A$. 
\end{proof}
The following example shows that the existence of the joins in theorem \ref{suf} is 
not necessary.

\begin{example}
Let $\D\in \QEA_{\omega}$ be the full weak set algebra with top element $^{\omega}\omega^{(\bold 0)}$ where $\bold 0$ is the constant $0$ sequence. 
Then, it is easy to show that for any $n<\omega$, $\Nr_n\A$
is completely representable.  
Let $X=\{\bold 0\}\in \D$. Then for all $i\in \omega$, we have ${\sf s}_i^0X=X$. 
But $(1, 0,\ldots )\in {\sf c}_0X$, so that $\sum_{i\in \omega} {\sf s}_i^0X=X\neq {\sf c}_0X$.  
Hence the joins in theorem \ref{suf} do not hold.
\end{example}

Now fix $1<n<\omega$ and let $\D$ be as in the previous example.  If we take $\D'=\Sg^{\D}\Nr_n\D$, then $\D'$ of course will still be a weak set algebra, and it will be locally finite, so that 
${\sf c}_ix=\sum^{\D}{\sf s}_j^ix$ for all $i<j<\omega$.
However, $\D'$ will be atomless as we proceed to show. Assume for contradiction  that it is not, and let $x\in \D'$ be an atom. 
Choose $k, l\in \omega$ with $k\neq l$ and ${\sf c}_kx=x$, this is possible since $\omega\setminus \Delta x$ is infinite. 
Then ${\sf c}_k(x\cdot  {\sf d}_{kl})=x$, so $x\cdot {\sf d}_{kl}\neq 0$. 
But $x$ is an atom, so $x\leq {\sf d}_{kl}$. This gives that $\Delta x=0$, 
and by \cite[Theorem 1.3.19]{HMT2} $x\leq -{\sf c}_k-{\sf d}_{kl}$. 
It is also easy to see that $({\sf c}_k -{\sf d}_{kl})^{\D'}={}^{\omega}\omega^{(\bold 0)}$, 
from which we conclude that $x=0$, 
which is a contradiction. 
For an ordinal $\alpha$, we let $\sf Gwsq_{\alpha}$ denote the class of $\QEA_{\alpha}$s  
whose cylindric reduct is a $\sf Gws_{\alpha}$, and the quasi-polyadic operations of substitutions 
defined like in quasi-polyadic equality set algebras relativized to $V$. That is, if $\A\in \sf Gwsq_{\alpha}$, then $\Rd_{ca}\A\in \sf Gws_{\alpha}$ with top element $V$ say, 
and for $X\in \A$, and $i< j<\alpha$,   ${\sf S}_{[i,j]}X=\{s\in V: s\circ [i, j]\in X\}$. 
\begin{theorem}\label{lastpa} Let  $\alpha$ be an infinite ordinal. 
\begin{enumarab}
\item If $\D\in \sf PEA_{\alpha+\omega}$ is atomic, then
any complete subalgebra of $\Nr_\alpha\D$ is completely representable with respet to $\sf Gwsq_{\alpha}$.  
\item If $\D\in \sf CPEA_{\alpha+\omega}$, then any complete subalgebra of $\Nr_{\alpha}\Rd_{qea}\Cm\At\D$ is completely representable with respect 
to $\sf Gwsq_{\alpha}$. In particular, if $\D$ is complete, then $\Nr_{\alpha}\Rd_{qea}\D$ is completely representable.
\end{enumarab}
\end{theorem}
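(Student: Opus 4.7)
The plan is to push through the strategy of Theorems~\ref{pa} and~\ref{paa} into the infinite-dimensional setting. The only essential change is that the target of the representation is no longer an ordinary cartesian set algebra but a generalized weak set algebra in the quasipolyadic equality signature, whose top element is a disjoint union of weak spaces of the form $\{s \in {}^{\alpha}U : |\{i<\alpha : s(i) \neq p(i)\}| < \omega\}$ relative to reference sequences $p$. The three ingredients from the finite-dimensional proofs remain the same: a Henkin-type dilation, a Baire-category selection of an atomic ultrafilter preserving a controlled family of joins, and a representation read off from the equivalence relation on indices induced by diagonal elements.

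For part~(1), fix $\A \subseteq_c \Nr_{\alpha}\D$ with $\D \in \PEA_{\alpha+\omega}$ atomic, and pick $0\neq a\in \A$. Repeating the $t = -{\sf c}_{(J)}(-d)$ argument of Theorem~\ref{pa} with $J = \Delta d \setminus \alpha$ shows $\Nr_{\alpha}\D \subseteq_c \D$; combined with Lemma~\ref{join2} this gives $\A \subseteq_c \D$, so $\A$ is atomic by Lemma~\ref{join}. Let $X = \At\A$. Choose a cardinal $\beta$ satisfying the local-degree bookkeeping condition $\sum_{s<\mathfrak{m}}\beta^{s} = \beta$ from \cite{DM,au}, and apply Lemma~\ref{dilation} to obtain an atomic dilation $\B \in \PEA_{\beta}$ with $\D = {\bf Nr}_{\alpha+\omega}\B$. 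In $\B$ the families
\[
{\sf c}_{(\Gamma)}p = \sum\nolimits^{\B}\{{\sf s}_{\tau^+}^{\B}p : \tau \upharpoonright ((\alpha+\omega)\setminus \Gamma) = Id\}, \qquad \sum\nolimits^{\B}{\sf s}_{\tau^+}^{\B}X = 1
\]
hold; the first by the construction of the dilation, the second from $\A \subseteq_c \D \subseteq_c \B$ together with complete additivity of substitutions in $\PEA$. Each corresponds to a nowhere-dense set in the Stone space of $\B$, so any principal ultrafilter generated by an atom below $a$ preserves all of them.

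Set $iEj \iff {\sf d}_{ij}^{\B} \in F$ on $\beta$, fix a reference sequence $p_{0}(i) = i/E$ ($i<\alpha$), and put $V = \{s \in {}^{\alpha}(\beta/E) : |\{i<\alpha : s(i) \neq p_{0}(i)\}| < \omega\}$. Define $f:\A \to \wp(V)$ by $f(x) = \{\bar t \in V : {\sf s}_{t \cup Id_{\beta\setminus \alpha}}^{\B}x \in F\}$. Well-definedness, the Boolean, diagonal, substitution and cylindrifier clauses all reduce to the computations already done in Theorem~\ref{pa}; atomicity of the representation follows from the second family of joins. For part~(2), replace $\D$ by $\D^{*} = \Cm\At\D$, which is complete and atomic; the identity $t = -\bigwedge_{i\in (\alpha+\omega)\setminus \alpha}{\sf c}_{i}(-d)$, available since $\D^{*}$ is complete, gives $\Nr_{\alpha}\Rd_{qea}\D^{*} \subseteq_{c} \D^{*}$; then dilate $\D^{*}$ to a regular $\beta > |\D^{*}|$ via Lemma~\ref{dilation} and run the same Stone-space argument with the joins ${\sf c}_{k}x = \sum_{l<\beta}{\sf s}_{l}^{k}x$ (proved as in Theorem~\ref{paa}) and $\sum {\sf s}_{\tau^{+}}^{\B}X = 1$.

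The main obstacle will be verifying that the representation lands in $\sf Gwsq_{\alpha}$ rather than in some larger relativized structure, and in particular that the cylindrifier witnesses keep us inside the weak space $V$. This uses the fact that any $x \in \A \subseteq \Nr_{\alpha}\D$ has finite dimension set $\Delta x$ inside $\alpha$: when eliminating ${\sf s}_{\sigma}^{\B}{\sf c}_{k}x$ via the first family in~$(\ast)$, one can choose the substitution index $u$ outside $\Delta({\sf s}_{\tau}^{\B}x)$, so only finitely many coordinates of $\sigma(k\mapsto u) \cup Id_{\beta\setminus \alpha}$ differ from the reference, ensuring the witness sits in $V$. A secondary bookkeeping issue is the sheer size of the family $\{\Gamma : \Gamma \subseteq \alpha+\omega\}$ indexing the joins in part~(1): this forces the local-degree inequality on $\beta$, which in turn guarantees that the collection of nowhere-dense sets is meager and hence avoided by principal ultrafilters.
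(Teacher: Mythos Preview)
Your approach is correct and mirrors the paper's own proof, which simply says to rerun Theorems~\ref{pa} and~\ref{paa} with $\alpha$ in place of $n$ and with the target a weak space $^{\alpha}(\beta/E)^{(Id)}$ rather than a full cartesian power. The dilation, the two families of joins, the choice of a principal ultrafilter, and the definition of $f$ are all exactly as you describe.

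Two small corrections to your closing commentary, neither of which breaks the argument. First, your claim that ``any $x\in\A\subseteq\Nr_{\alpha}\D$ has finite dimension set $\Delta x$ inside $\alpha$'' is false when $\alpha$ is infinite: elements of $\Nr_{\alpha}\D$ can have $\Delta x$ equal to all of $\alpha$. What actually makes the cylindrifier step go through is the weaker fact $\Delta x\subseteq\alpha$ together with $|\{i:\sigma(i)\neq i\}|<\omega$ (this last coming from the weak--space restriction on $t$), so the required $\lambda$ may be chosen in $\beta\setminus\alpha$, which is certainly nonempty. Second, your worry about the cardinality of the family $\{\Gamma\subseteq\alpha+\omega\}$ is unnecessary: the paper does not invoke Baire category here. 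A principal ultrafilter, being an isolated point of the Stone space, lies outside \emph{every} nowhere dense set regardless of how many there are, so no meagerness bookkeeping on $\beta$ is needed for this step.
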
 
\begin{proof} The proof is like when $\alpha=n<\omega$. Let $\A\subseteq_c \Nr_{\alpha}\D$. We want to completely represent $\A$ with respect to a 
$\sf Gwsq_{\alpha}$. Given a non-zero $c\in \A$, one dilates $\D$ to $\B\in \PEA_{\beta}$, where $\beta$ is as specified in theorem \ref{pa}, 
and finds a principal ultrafilter $F$ generated by an atom below $c$ preserving the set of joins (*) and (**)
as stipulated in the proof of theorem \ref{pa}, replacing in these joins $\omega$ by the countable ordinal $\alpha+\omega$. 
In forming the required representation using $F$, the top element
will be a {\it weak space of dimension $\alpha$} and not `a cartesian square'. 
The map establishing the complete representation, is defined like before, but using the weak
space $^{\alpha}(\mathfrak{n}/E)^{(Id)}$, where $E$ is the equivalence relation 
defined as above on $\mathfrak{n}$ via $iEj \iff\ {\sf d}_{ij}^{\B}\in F$.
In more detail $f: \A\to \wp({}^\alpha(\beta/E)^{(Id)})$, via $x\mapsto \{\bar{t}\in {}^\alpha(\beta/E)^{(Id)}: {\sf s}_{t\cup Id_{\beta\sim \alpha}}^{\B}x\in F\},$
where $\bar{t}(i/E)=t(i)$ ($i<\alpha$) and $t\in {}^\alpha\beta$.  The $\CPEA$ case is entirely analogous. 
The proof is like the case when $\alpha=n<\omega$, dealt with in theorem \ref{paa}
replacing  once more 
set algebras by weak set algebras.
\end{proof}

\section{Finite dimensional algebras}
This section is devoted to showing that several classes of completely representable algebras (of relations) are not elementary. We need some preparing to do.
 From now on,  unless otherwise indicated, $n$ is fixed to be  a finite ordinal $>2$.
Let $i<n$. For $n$--ary sequences $\bar{x}$ and $\bar{y}$,  we write $\bar{x}\equiv_ i\bar{y}$ $\iff \bar{y}(j)=\bar{x}(j)$ for all $j\neq i$,
For $i, j<n$ the replacement $[i/j]$ is the map that is like the identity on $n$, except that $i$ is mapped to $j$ and the transposition 
$[i, j]$ is the like the identity on $n$, except that $i$ is swapped with $j$. 
\begin{definition}\label{sub} Let $m$ be a finite ordinal $>0$. An $\sf s$ word is a finite string of substitutions $({\sf s}_i^j)$ $(i, j<m)$,
a $\sf c$ word is a finite string of cylindrifications $({\sf c}_i), i<m$;
an $\sf sc$ word $w$, is a finite string of both, namely, of substitutions and cylindrifications.
An $\sf sc$ word
induces a partial map $\hat{w}:m\to m$:
\begin{itemize}

\item $\hat{\epsilon}=Id,$

\item $\widehat{w_j^i}=\hat{w}\circ [i|j],$

\item $\widehat{w{\sf c}_i}= \hat{w}\upharpoonright(m\smallsetminus \{i\}).$

\end{itemize}
If $\bar a\in {}^{<m-1}m$, we write ${\sf s}_{\bar a}$, or
${\sf s}_{a_0\ldots a_{k-1}}$, where $k=|\bar a|$,
for an  arbitrary chosen $\sf sc$ word $w$
such that $\hat{w}=\bar a.$
Such a $w$  exists by \cite[Definition~5.23 ~Lemma 13.29]{HHbook}.
\end{definition}
From now on,  unless otherwise indicated, $n$ is fixed to be  a finite ordinal $>2$.
\begin{definition}\label{game}
\begin{enumarab}
\item  Let $\K_n$ be any variety between $\Sc_n$ and $\QEA_n$. Assume that $\A\in \K_n$ is  atomic and that $m, k\leq \omega$. 
The {\it atomic game $G^m_k(\At\A)$, or simply $G^m_k$}, is the game played on atomic networks
of $\A$ using $m$ nodes and having $k$ rounds \cite[Definition 3.3.2]{HHbook2}, where
\pa\ is offered only one move, namely, {\it a cylindrifier move}: 
Suppose that we are at round $t>0$. Then \pa\ picks a previously played network $N_t$ $(\nodes(N_t)\subseteq m$), 
$i<n,$ $a\in \At\A$, $\bar{x}\in {}^n\nodes(N_t)$, such that $N_t(\bar{x})\leq {\sf c}_ia$. For her response, \pe\ has to deliver a network $M$
such that $\nodes(M)\subseteq m$,  $M\equiv _i N$, and there is $\bar{y}\in {}^n\nodes(M)$
that satisfies $\bar{y}\equiv _i \bar{x}$ and $M(\bar{y})=a$.  

We write $G_k(\At\A)$, or simply $G_k$, for $G_k^m(\At\A)$ if $m\geq \omega$.
\item The $\omega$--rounded game $\bold G^m(\At\A)$ or simply $\bold G^m$ is like the game $G_{\omega}^m(\At\A)$ 
except that \pa\ has the option 
to reuse the $m$ nodes in play.
\end{enumarab}
\end{definition}
{\it Observe that for $k,m\leq \omega$, the games  $G_k^m(\At\A)$ and $\bold G^m(\At\A)$ depend on the signature of $\A$.}
\begin{definition}\label{sub2} 
Fix $2<n<m$. Assume that $\C\in\CA_m$, $\A\subseteq\mathfrak{Nr}_n\C$ is an
atomic $\CA_n$ and $N$ is an $\A$--network with $\nodes(N)\subseteq m$. Define
$N^+\in\C$ by (with notation as introducted in Definition \ref{sub}):
\[N^+ =
 \prod_{i_0,\ldots, i_{n-1}\in\nodes(N)}{\sf s}_{i_0, \ldots, i_{n-1}}{}N(i_0,\ldots, i_{n-1}).\]
For a network $N$ and  function $\theta$,  the network
$N\theta$ is the complete labelled graph with nodes
$\theta^{-1}(\nodes(N))=\set{x\in\dom(\theta):\theta(x)\in\nodes(N)}$,
and labelling defined by
$$(N\theta)(i_0,\ldots, i_{n-1}) = N(\theta(i_0), \theta(i_1), \ldots,  \theta(i_{n-1})),$$
for $i_0, \ldots, i_{n-1}\in\theta^{-1}(\nodes(N))$.
\end{definition}

For a class $\bold K$ of $\sf BAO$s, we denote by $\bold K^{\sf ad}$ the class of completely additive algebras in $\bold K$.
\begin{lemma}\label{n}
Let $2<n<\omega$, and assume that $m>n$. Let $\sf K$ be any variety between $\Sc_n$ and $\QEA_n$. If  $\A\in \bold S_c\Nr_n\K_m^{\sf ad}$ is atomic, then \pe\ has a \ws\ in $\bold G^m(\At\A).$ 
If $\A\in \sf K$, and $\A$ has a complete $m$-square representation then  \pe\ has a \ws\ in $G^m_{\omega}(\At\A).$  
\end{lemma}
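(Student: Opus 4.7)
The plan is to construct explicit strategies for \pe\ in each part by maintaining a consistency invariant linking the currently played network to the ambient structure provided by the hypothesis.

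For the first part, fix a complete embedding $\A\hookrightarrow\Nr_n\C$ with $\C\in\K_m^{\sf ad}$; by Lemma~\ref{join}, $\A$ is atomic. \pe's invariant at round $t$ is that the element
\[ N_t^{+}\;=\;\prod_{\bar x\in{}^{n}\nodes(N_t)}\s_{\bar x}\,N_t(\bar x)\;\in\;\C, \]
formed via the $\sf sc$-words of Definition~\ref{sub}, is non-zero. The initial move (one tuple on a single node labelled by any chosen atom) satisfies this trivially. At a cylindrifier move $(N_t,\bar x,i,a)$ with $N_t(\bar x)\leq \cyl i a$, \pe\ picks $z\in m\setminus\{\bar x(j):j\neq i\}$, which is possible even under \pa's reuse option in $\bold G^m$ since $m>n-1$. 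Setting $\bar y=\bar x(i\mapsto z)$ and applying the standard $\sf sc$-identities available in $\K_m$ yields $N_t^{+}\cdot\s_{\bar y}\,a\neq 0$ in $\C$, since applying $\cyl z$ to $N_t^{+}$ converts $N_t(\bar x)\leq\cyl ia$ into a witness for $a$ at $\bar y$. Complete additivity of $\C$'s operations, together with $\A\subseteq_c\Nr_n\C$, then permits \pe\ to expand each $\s_{\bar u}\,a$ as a sum over $\At\A$ and choose an atom labelling $M(\bar u)\in\At\A$ for every new $n$-tuple $\bar u$ through $z$ so that the resulting $M^{+}\neq 0$; for $\bar y$ itself she must (and can) take $M(\bar y)=a$. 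This preserves the invariant, and \pe\ wins.

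For the second part, let $h:\A\to\wp(V)$ be a complete $m$-square representation. \pe\ maintains, alongside each $N_t$, a map $f_t:\nodes(N_t)\to X$ (with $X$ the base set of $V$) such that $f_t\circ\bar x\in h(N_t(\bar x))$ for every $\bar x\in{}^{n}\nodes(N_t)$. At a cylindrifier move, the $m$-squareness of $h$ supplies $u\in X$ with $(f_t\circ\bar x)(i\mapsto u)\in h(a)$; this is precisely the content of $m$-square completeness applied to the atom $a\in\At\A$, since the completeness of $h$ forces the witness of $h(\cyl i a)$ to land in some $h(b)$ with $b$ an atom, and the $m$-square condition ensures $b=a$ can be arranged. \pe\ designates a (possibly reused) node in $m$ for $u$, extending $f_t$ to $f_{t+1}$, and defines $M$ by pulling atom labels back through $f_{t+1}$.

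The main obstacle is the algebraic part of the first assertion: the identity $N_t^{+}\cdot\s_{\bar y}\,a\neq 0$ in $\C$ together with the simultaneous selection of a coherent family of atom labels on all new $n$-tuples through $z$ so that $M^{+}\neq 0$. This is precisely where complete additivity of the $\K_m^{\sf ad}$-hypothesis is indispensable: it legitimises the infinite distributivity needed to expand products against sums of atoms of $\A$, guaranteeing that at least one choice of atom labellings leaves the product non-zero. All computations are performed in the minimal $\Sc_n$-signature, so the argument is uniform across every variety $\K$ between $\Sc_n$ and $\QEA_n$, which is exactly what the $\sf sc$-word machinery of Definition~\ref{sub} is designed to support.
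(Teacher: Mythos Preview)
Your approach is essentially the paper's own. For the first part, the paper maintains exactly your invariant $N^+\neq 0$ and proceeds by isolating three auxiliary facts: (1) complete additivity of the $\s_{\bar\imath}$'s in $\C$ (from $\A\subseteq_c\Nr_n\C$) gives an atom $a$ with $\s_{\bar\imath}a\cdot x\neq 0$ for any non-zero $x$; (2) iterating (1) builds a network $N$ on any finite $I\subseteq m$ with $x\cdot N^+\neq 0$; (3) $(N\theta)^+\neq 0$ whenever $N^+\neq 0$ and $\nodes(N)\subsetneq m$. Your compressed argument hides exactly these three steps. One small slip: when the witness node $z$ is reused, you cannot in general get $N_t^+\cdot\s_{\bar y}a\neq 0$; what you obtain (and what your own phrase ``applying $\c_z$ to $N_t^+$'' already suggests) is $(\c_z N_t^+)\cdot\s_{\bar y}a\neq 0$, i.e.\ you must first drop the old labels through $z$ before refilling them---this is precisely the role of the paper's fact~(3).

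For the second part, the paper packages the same idea differently: it builds the $m$-dimensional basis $H=\{N_{\bar a}:\bar a\in{\sf C}^m(M)\}$ from the complete $m$-square representation and has \pe\ play subnetworks of members of $H$. Your formulation, maintaining a map $f_t:\nodes(N_t)\to X$ with $f_t\circ\bar x\in h(N_t(\bar x))$, is the same device stated pointwise (your $f_t$ is the restriction of some $\bar a\in{\sf C}^m(M)$, and $N_t$ is the corresponding subnetwork of $N_{\bar a}$). One remark: the $m$-squareness of $h$ \emph{directly} supplies the witness $u$ with $(f_t\circ\bar x)(i\mapsto u)\in h(a)$; you do not need to pass through ``completeness forces the witness to land in some $h(b)$''---completeness is used only to guarantee that every tuple in the representation lies below some atom, so that labels can be read off.
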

\begin{proof} We give the proof for $\CA$s italicizing the part where additivity is used. The stipulated additivity condition when considering only $\CA$s is superflouos since it holds anyway. 
The proof  lifts  ideas
in \cite[Lemmata 29, 26, 27]{r} formulated for relation algebras to $\CA_n$.
Fix $2<n<m$. Assume that $\C\in\CA_m$, $\A\subseteq_c\mathfrak{Nr}_n\C$ is an
atomic $\CA_n$. 
Then the following hold:

(1): for all $x\in\C\setminus\set0$ and all $i_0, \ldots, i_{n-1} < m$, there is $a\in\At\A$, such that
${\sf s}_{i_0,\ldots, i_{n-1}}a\;.\; x\neq 0$,

(2): for any $x\in\C\setminus\set0$ and any
finite set $I\subseteq m$, there is a network $N$ such that
$\nodes(N)=I$ and $x\cdot N^+\neq 0$,, with notation as inDefinition \ref{sub2}. Furthermore, for any networks $M, N$ if
$M^+\cdot N^+\neq 0$, then
$M\restr {\nodes(M)\cap\nodes(N)}=N\restr {\nodes(M)\cap\nodes(N)},$

(3): if $\theta$ is any partial, finite map $m\to m$
and if $\nodes(N)$ is a proper subset of $m$,
then $N^+\neq 0\rightarrow {(N\theta)^+}\neq 0$. If $i\not\in\nodes(N),$ then ${\sf c}_iN^+=N^+$.


Since $\A\subseteq _c\mathfrak{Nr}_n \C$, then $\sum^{\C}\At\A=1$. 
{\it For (1), ${\sf s}^i_j$ is a
completely additive operator (any $i, j<m$), hence ${\sf s}_{i_0,\ldots, i_{n-1}}$
is, too.}
So $\sum^{\C}\set{{\sf s}_{i_0\ldots, i_{n-1}}a:a\in\At(\A)}={\sf s}_{i_0\ldots i_{n-1}}
\sum^{\C}\At\A={\sf s}_{i_0\ldots, i_{n-1}}1=1$ for any $i_0,\ldots, i_{n-1}<m$.  Let $x\in\C\setminus\set0$.  Assume for contradiction
that  ${\sf s}_{i_0\ldots, i_{n-1}}a\cdot x=0$ for all $a\in\At\A$. Then  $1-x$ will be
an upper bound for $\set{{\sf s}_{i_0\ldots i_{n-1}}a: a\in\At\A}.$
But this is impossible
because $\sum^{\C}\set{{\sf s}_{i_0\ldots, i_{n-1}}a :a\in\At\A}=1.$

To prove the first part of (2), we repeatedly use (1).
We define the edge labelling of $N$ one edge
at a time. Initially, no hyperedges are labelled.  Suppose
$E\subseteq\nodes(N)\times\nodes(N)\ldots  \times\nodes(N)$ is the set of labelled hyperedges of $
N$ (initially $E=\emptyset$) and
$x\;.\;\prod_{\bar c \in E}{\sf s}_{\bar c}N(\bar c)\neq 0$.  Pick $\bar d$ such that $\bar d\not\in E$.
Then by (1) there is $a\in\At(\A)$ such that
$x\;.\;\prod_{\bar c\in E}{\sf s}_{\bar c}N(\bar c)\;.\;{\sf s}_{\bar d}a\neq 0$.
Include the hyperedge $\bar d$ in $E$.  We keep on doing this until eventually  all hyperedges will be
labelled, so we obtain a completely labelled graph $N$ with $N^+\neq 0$.
it is easily checked that $N$ is a network.
For the second part of $(2)$, we proceed contrapositively. Assume that there is
$\bar c \in{}\nodes(M)\cap\nodes(N)$ such that $M(\bar c )\neq N(\bar c)$.
Since edges are labelled by atoms, we have $M(\bar c)\cdot N(\bar c)=0,$
so
$0={\sf s}_{\bar c}0={\sf s}_{\bar c}M(\bar c)\;.\; {\sf s}_{\bar c}N(\bar c)\geq M^+\cdot N^+$.
A piece of notation. For $i<m$, let $Id_{-i}$ be the partial map $\{(k,k): k\in m\smallsetminus\{i\}\}.$
For the first part of (3)
(cf. \cite[Lemma~13.29]{HHbook} using the notation in {\it op.cit}), since there is
$k\in m\setminus\nodes(N)$, \/ $\theta$ can be
expressed as a product $\sigma_0\sigma_1\ldots\sigma_t$ of maps such
that, for $s\leq t$, we have either $\sigma_s=Id_{-i}$ for some $i<m$
or $\sigma_s=[i/j]$ for some $i, j<m$ and where
$i\not\in\nodes(N\sigma_0\ldots\sigma_{s-1})$.
But clearly  $(N Id_{-j})^+\geq N^+$ and if $i\not\in\nodes(N)$ and $j\in\nodes(N)$, then
$N^+\neq 0 \rightarrow {(N[i/j])}^+\neq 0$.
The required now follows.  The last part is straightforward.

Using the above proven facts,  we are now ready to show that \pe\  has a \ws\ in $\bold G^m$. She can always
play a network $N$ with $\nodes(N)\subseteq m,$ such that
$N^+\neq 0$.\\
In the initial round, let \pa\ play $a\in \At\A$.
\pe\ plays a network $N$ with $N(0, \ldots, n-1)=a$. Then $N^+=a\neq 0$.
Recall that here \pa\ is offered only one (cylindrifier) move.
At a later stage, suppose \pa\ plays the cylindrifier move, which we denote by
$(N, \langle f_0, \ldots, f_{n-2}\rangle, k, b, l).$
He picks a previously played network $N$,  $f_i\in \nodes(N), \;l<n,  k\notin \{f_i: i<n-2\}$,
such that $b\leq {\sf c}_l N(f_0,\ldots,  f_{i-1}, x, f_{i+1}, \ldots, f_{n-2})$ and $N^+\neq 0$.
Let $\bar a=\langle f_0\ldots f_{i-1}, k, f_{i+1}, \ldots f_{n-2}\rangle.$
Then by  second part of  (3)  we have that ${\sf c}_lN^+\cdot {\sf s}_{\bar a}b\neq 0$
and so  by first part of (2), there is a network  $M$ such that
$M^+\cdot{\sf c}_{l}N^+\cdot {\sf s}_{\bar a}b\neq 0$.
Hence $M(f_0,\dots, f_{i-1}, k, f_{i-2}, \ldots$ $, f_{n-2})=b$,
$\nodes(M)=\nodes(N)\cup\set k$, and $M^+\neq 0$, so this property is maintained.

Assume that $\A$ is an atomic $\CA_n$ having a complete $m$--square representation.
We will show that \pe\ has a \ws\  in $G_{\omega}^m(\At\A)$.
Let $\Mo$ be a complete $m$--square representation of
 $\A$.
One constructs the $m$--dimensional atomic dilation $\D$ using $L_{\infty, \omega}^n$ formulas
from the  complete $m$--square representation as the algebra with univese ${\sf C}^{m}(M)$ and operations induced by clique guarded semantics.
For each $\bar{a}\in 1^{\D},$ define \cite[Definition 13.22] {HHbook} a labelled
hypergraph $N_{\bar{a}}$ with nodes $m$, and
$N_{\bar{a}}(\bar{x})$ when $|\bar{x}|=n$, is the unique atom of $\A$
containing the tuple of length $m>n$,
$(a_{x_0},\ldots, a_{x_{1}},\ldots, a_{x_{n-1}}, a_{x_0}\ldots,\ldots a_{x_0}).$
It is clear that if $s\in 1^{\D}$ and $i, j<m$,
then $s\circ [i|j]\in 1^{\D}$.
By \cite[Lemma 13.24]{HHbook}  $N_{\bar{a}}$ is a network.
Let $H$ be the symmetric closure
of $\{N_a: \bar{a}\in 1^M\}$, that is $\{N\theta: \theta:m\to m, N\in H\}$.
Then $H$ is an $m$--dimensional basis. 
Now \pe\ can win $G_{\omega}^m$ by always
playing a subnetwork of a network in the constructed $H$.
In round $0$, when \pa\ plays
the atom $a\in \A$, \pe\ chooses $N\in H$ with $N(0,1,\ldots, n-1)=a$ and plays $N\upharpoonright n$.
In round $t>0$, inductively if the current network is $N_{t-1}\subseteq M\in H$, then no matter how \pa\ defines $N$, we have
$N\subseteq M$ and $|N|<m$, so there is $z<m$, with $z\notin \nodes(N)$.
Assume that  \pa\ picks $x_0,\ldots, x_{n-1}\in \nodes(N)$, $a\in \At\A$ and $i<n$ such that
$N(x_0,\ldots, x_{n-1})\leq {\sf c}_ia$, so $M(x_0, \ldots  x_{n-1})\leq {\sf c}_ia$,
and hence (by the properties of $H$), there is $M'\in H$ with
$M'\equiv _i M$ and $M'(x_0, \ldots, z, \ldots,  x_{n-1})=a$, with $z$ in the $i$th place.
Now \pe\ responds with the restriction of $M'$
to $\nodes(N)\cup \{z\}$.

\end{proof}
 In the next Theorem ${\sf LCA}_n$ denotes the class of atomic $\CA_n$s whose atom structures satisfy the Lyndon condition as defined in \cite{HHbook2}. 
It is known that $\sf LCA_n$ is n elementary class admitting no finite first order axiomatization; furthermore ${\sf LCA}_n={\bf El}\CRCA_n$.

\begin{theorem}\label{bsl} Let $\kappa$ be an infinite cardinal. Then there exists a $\C\in \QEA_{\omega}$ such that  for all 
$2<n<\omega$, $|\mathfrak{Nr}_n\C|=2^{\kappa}$, $\mathfrak{Nr}_n\C\in {\sf LQEA}_n$, 
but $\Rd_{df}\mathfrak{Nr}_n\C$ is not completely representable.  
cannot be omitted.  
\end{theorem}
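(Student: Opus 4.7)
The plan is to produce $\C$ as an $\omega$-dilation of an atomic $\QEA_n$ built from a Monk-like colored-graph atom structure parameterized by the cardinal $\kappa$. Concretely, I would fix a suitable graph (or family of graphs) on $\kappa$ colors and form the atom structure whose atoms are $n$-dimensional coloured cliques, much as in the Hirsch--Hodkinson/Monk constructions alluded to in the introduction. The complex algebra of this atom structure has $2^\kappa$ elements, which will account for the required cardinality of $\Nr_n\C$. I would then invoke Lemma~\ref{dilation} (applied to the $\QEA$ clause, since $\QEA$s are real reducts of $\PEA$s, or else via its transformation-system variant for $\QEA_\omega$) to lift a uniform $\omega$-dimensional atomic $\D\in\QEA_{\omega}$ whose $n$-neat reduct realizes the prescribed atomic structure simultaneously for every $n>2$, and set $\C:=\D$.

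The cardinality claim $|\Nr_n\C|=2^\kappa$ will then fall out of the construction, since atoms of $\Nr_n\C$ correspond to the $2^\kappa$ admissible colored configurations. To establish $\Nr_n\C\in\sf LQEA_n$, I would show \pe\ wins each finite-round atomic $\QEA_n$-game on $\At\Nr_n\C$, using the substitutions, transpositions, and diagonals in the $\QEA_n$ signature to reshuffle colors along \pa's cylindrifier moves: a standard back-and-forth argument on colored graphs, with the number of available colors ($\kappa$) dominating the round bound, suffices. Since $\sf LQEA_n = \bold{El}\,\CRQEA_n$ (analogously to the $\CA$ case cited from \cite{HHbook2}), this yields $\Nr_n\C\in\sf LQEA_n$.

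The heart of the argument is the failure of complete representability of $\Rd_{df}\Nr_n\C$. Here I would exhibit a winning strategy for \pa\ in the atomic game $\bold G^m$ (or its $\omega$-round variant $G^m_\omega$) played on $\At(\Rd_{df}\Nr_n\C)$ for some appropriate $m$, using the infinite Ramsey-type combinatorial core flagged in the introduction. Without diagonals, transpositions, or substitutions, \pe\ loses the ability to collapse nodes or transport colorings, and \pa\ can force a monochromatic configuration violating the atom structure's consistency constraints. By the standard games-to-representations translation (contrapositive of the easy direction of Lemma~\ref{n} adapted to $\Df_n$), absence of \pe's winning strategy in the complete-representability game precludes a complete $\Df_n$-representation.

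The main obstacle is the calibration of the Monk-like coloring: the atom structure must be rich enough in color labels that \pe\ wins every finite-round $\QEA_n$ game (giving the Lyndon condition), yet sparse enough in ``symmetry'' that once diagonals, substitutions, and transpositions are stripped away \pa\ can execute an $\omega$-round winning strategy against her. This asymmetry between the $\QEA_n$ signature and the $\Df_n$ signature -- that the former hides, via substitutions and diagonals, a failure of complete representability that the latter exposes -- is exactly where the Ramsey-theoretic combinatorics must do their work, and where essentially all technical effort will be concentrated.
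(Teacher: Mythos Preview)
Your plan has two substantive gaps that prevent it from going through as stated.

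First, the construction of $\C$. Lemma~\ref{dilation} applies only to $\PA$, $\PEA$, ${\sf CPA}$, and ${\sf CPEA}$---classes whose signature contains \emph{all} substitutions ${\sf s}_\tau$ for $\tau\in{}^\alpha\alpha$. The class $\QEA_\omega$ has only finite substitutions, and there is no dilation lemma carrying an arbitrary atomic $\QEA_n$ up to a $\QEA_\omega$. The paper does not dilate an $n$-dimensional algebra; it builds a \emph{relation algebra} $\A$ (atoms $\Id$, $\g_0^i$ for $i<2^\kappa$, $\r_j$ for $1\le j<\kappa$, with monochromatic triangles forbidden), shows it has an $\omega$-dimensional amalgamation class $S$ of atomic networks, and takes $\C$ to be a specific subalgebra of $\Ca(S)$ generated by finite-or-cofinite labelled networks. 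One then gets $\A=\Ra(\C)$ and $\C\in\QEA_\omega$, and $|\Nr_n\C|=2^\kappa$ follows from the description of elements as finite unions.

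Second, and more seriously, your proposed proof of non--complete-representability cannot work. You intend to exhibit a \ws\ for \pa\ in some game $\bold G^m$ or $G^m_\omega$ on $\At(\Rd_{df}\Nr_n\C)$. But by Lemma~\ref{n}, since $\Nr_n\C\in\bold S_c\Nr_n\CA_\omega$, \pe\ has a \ws\ in $\bold G^\omega(\At\Nr_n\C)$, hence in $G_\omega$ and in every $G_k$; this is exactly how the paper obtains $\Nr_n\C\in{\sf LQEA}_n$ essentially for free. The atom structure has $2^\kappa$ atoms, and \pe's \ws\ in $G_\omega$ on an \emph{uncountable} atom structure does not yield a complete representation---that implication requires countably many atoms. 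The failure of complete representability is therefore invisible to these games. The paper instead proves it directly at the relation-algebra level: assuming a complete representation of $\A$, one finds $2^\kappa$ points whose mutual edges are all labelled by one of the $\kappa$ red atoms, and the Erd\H{o}s--Rado theorem $\exp_r(\kappa)^+\to(\kappa^+)^{r+1}_\kappa$ forces a monochromatic red triangle, contradicting the forbidden-triple constraint. Non--complete-representability of $\Nr_n\C$ (and its $\Df$ reduct, via generation by two-dimensional elements) then follows because $\A=\Ra(\C)$.

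Your framing of the $\Df$ reduct as ``exposing'' a failure that the richer signature ``hides'' is backwards: the $\Df$ statement is a \emph{strengthening} (if even the diagonal-free reduct is not completely representable, a fortiori neither is the full $\QEA_n$), not a consequence of losing operations.
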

\begin{proof}
One uses the ideas in \cite{bsl} replacing 
$\omega$ and $\omega_1$ by $\kappa$ and $2^{\kappa}$, respectively, constructing $\C$ from a relation algebra. 
The resulting (new) relation algebra $\R$ has an $\omega$ 
dimensional amalgamation class $S$, cf. \cite[Lemma 3]{bsl}.
Using the notation in \cite[Lemma 6]{bsl}, let $\C$ be the subalgebra of $\Ca(S)$ generated by $X'$; the latter is defined just before the lemma.
Then  $\R = \mathfrak{Ra}(\C)$, cf. \cite[Lemmata 6, 7]{bsl}, but $\R$ has no complete representation \cite[Lemma 2]{bsl}.
Then $\mathfrak{Nr}_n\C$ ($2<n<\omega$) is atomic, but has no complete representation. By Lemma \ref{n}, \pe\ has a \ws\ in $\bold G_{\omega}(\At\mathfrak{Nr}_n\C)$, hence she has a \ws\ in   
$G_{\omega}(\At\mathfrak{Nr}_n\C)$, {\it  a fortiori} in $G_k(\At\mathfrak{Nr}_n\C)$
for all $k\in \omega$, hence by coding the \ws's of the  $G_k$'s 
in first order sentences, we get that $\mathfrak{Nr}_n\C$ satisfies these first order sentences which are precisely (by definition) 
the Lyndon conditions.   
We use the following uncountable version of Ramsey's theorem due to
Erdos and Rado:
If $r\geq 2$ is finite, $k$  an infinite cardinal, then
$exp_r(k)^+\to (k^+)_k^{r+1}$
where $exp_0(k)=k$ and inductively $exp_{r+1}(k)=2^{exp_r(k)}$.
The above partition symbol describes the following statement. If $f$ is a coloring of the $r+1$
element subsets of a set of cardinality $exp_r(k)^+$
in $k$ many colors, then there is a homogeneous set of cardinality $k^+$
(a set, all whose $r+1$ element subsets get the same $f$-value).
Let $\kappa$ be the given cardinal. We use a variation a simplified more basic version of a rainbow construction where only 
the two predominent  colours, namely, the reds and blues are available. 
The algebra $\C$ will be constructed from a relation algebra possesing an $\omega$-dimensional cylindric basis.
To define the relation algebra we specify its atoms and the forbidden triples of atoms. The atoms are $\Id, \; \g_0^i:i<2^{\kappa}$ and $\r_j:1\leq j<
\kappa$, all symmetric.  The forbidden triples of atoms are all
permutations of $({\sf Id}, x, y)$ for $x \neq y$, \/$(\r_j, \r_j, \r_j)$ for
$1\leq j<\kappa$ and $(\g_0^i, \g_0^{i'}, \g_0^{i^*})$ for $i, i',
i^*<2^{\kappa}.$ 
Write $\g_0$ for $\set{\g_0^i:i<2^{\kappa}}$ and $\r_+$ for
$\set{\r_j:1\leq j<\kappa}$. Call this atom
structure $\alpha$.  
Consider the term algebra $\A$ defined to be the subalgebra of the complex algebra of this atom structure generated by the atoms.
We claim that $\A$, as a relation algebra,  has no complete representation, hence any algebra sharing this 
atom structure is not completely representable, too. Indeed, it is easy to show that if $\A$ and $\B$ 
are atomic relation algebras sharing the same atom structure, so that $\At\A=\At\B$, then $\A$ is completely representable $\iff$ $\B$ is completely representable.

Assume for contradiction that $\A$ has a complete representation $\Mo$.  Let $x, y$ be points in the
representation with $\Mo \models \r_1(x, y)$.  For each $i< 2^{\kappa}$, there is a
point $z_i \in \Mo$ such that $\Mo \models \g_0^i(x, z_i) \wedge \r_1(z_i, y)$.
Let $Z = \set{z_i:i<2^{\kappa}}$.  Within $Z$, each edge is labelled by one of the $\kappa$ atoms in
$\r_+$.  The Erdos-Rado theorem forces the existence of three points
$z^1, z^2, z^3 \in Z$ such that $\Mo \models \r_j(z^1, z^2) \wedge \r_j(z^2, z^3)
\wedge \r_j(z^3, z_1)$, for some single $j<\kappa$.  This contradicts the
definition of composition in $\A$ (since we avoided monochromatic triangles).
Let $S$ be the set of all atomic $\A$-networks $N$ with nodes
$\omega$ such that $\{\r_i: 1\leq i<\kappa: \r_i \text{ is the label
of an edge in $N$}\}$ is finite.
Then it is straightforward to show $S$ is an amalgamation class, that is for all $M, N
\in S$ if $M \equiv_{ij} N$ then there is $L \in S$ with
$M \equiv_i L \equiv_j N$, witness \cite[Definition 12.8]{HHbook} for notation.
Now let $X$ be the set of finite $\A$-networks $N$ with nodes
$\subseteq\kappa$ such that:

\begin{enumerate}
\item each edge of $N$ is either (a) an atom of
$\A$ or (b) a cofinite subset of $\r_+=\set{\r_j:1\leq j<\kappa}$ or (c)
a cofinite subset of $\g_0=\set{\g_0^i:i<2^{\kappa}}$ and

\item  $N$ is `triangle-closed', i.e. for all $l, m, n \in \nodes(N)$ we
have $N(l, n) \leq N(l,m);N(m,n)$.  That means if an edge $(l,m)$ is
labelled by $\sf Id$ then $N(l,n)= N(m,n)$ and if $N(l,m), N(m,n) \leq
\g_0$ then $N(l,n)\cdot \g_0 = 0$ and if $N(l,m)=N(m,n) =
\r_j$ (some $1\leq j<\omega$) then $N(l,n)\cdot \r_j = 0$.
\end{enumerate}
For $N\in X$ let $\widehat{N}\in\Ca(S)$ be defined by
$$\set{L\in S: L(m,n)\leq
N(m,n) \mbox{ for } m,n\in \nodes(N)}.$$
For $i\in \omega$, let $N\restr{-i}$ be the subgraph of $N$ obtained by deleting the node $i$.
Then if $N\in X, \; i<\omega$ then $\widehat{\cyl i N} =
\widehat{N\restr{-i}}$.
The inclusion $\widehat{\cyl i N} \subseteq (\widehat{N\restr{-i})}$ is clear.
Conversely, let $L \in \widehat{(N\restr{-i})}$.  We seek $M \equiv_i L$ with
$M\in \widehat{N}$.  This will prove that $L \in \widehat{\cyl i N}$, as required.
Since $L\in S$ the set $T = \set{\r_i \notin L}$ is infinite.  Let $T$
be the disjoint union of two infinite sets $Y \cup Y'$, say.  To
define the $\omega$-network $M$ we must define the labels of all edges
involving the node $i$ (other labels are given by $M\equiv_i L$).  We
define these labels by enumerating the edges and labeling them one at
a time.  So let $j \neq i < \kappa$.  Suppose $j\in \nodes(N)$.  We
must choose $M(i,j) \leq N(i,j)$.  If $N(i,j)$ is an atom then of
course $M(i,j)=N(i,j)$.  Since $N$ is finite, this defines only
finitely many labels of $M$.  If $N(i,j)$ is a cofinite subset of
$\g_0$ then we let $M(i,j)$ be an arbitrary atom in $N(i,j)$.  And if
$N(i,j)$ is a cofinite subset of $\r_+$ then let $M(i,j)$ be an element
of $N(i,j)\cap Y$ which has not been used as the label of any edge of
$M$ which has already been chosen (possible, since at each stage only
finitely many have been chosen so far).  If $j\notin \nodes(N)$ then we
can let $M(i,j)= \r_k \in Y$ some $1\leq k < \kappa$ such that no edge of $M$
has already been labelled by $\r_k$.  It is not hard to check that each
triangle of $M$ is consistent (we have avoided all monochromatic
triangles) and clearly $M\in \widehat{N}$ and $M\equiv_i L$.  The labeling avoided all
but finitely many elements of $Y'$, so $M\in S$. So
$\widehat{(N\restr{-i})} \subseteq \widehat{\cyl i N}$.

Now let $\widehat{X} = \set{\widehat{N}:N\in X} \subseteq \Ca(S)$.
Then we claim that the subalgebra of $\Ca(S)$ generated by $\widehat{X}$ is simply obtained from
$\widehat{X}$ by closing under finite unions.
Clearly all these finite unions are generated by $\widehat{X}$.  We must show
that the set of finite unions of $\widehat{X}$ is closed under all cylindric
operations.  Closure under unions is given.  For $\widehat{N}\in X$ we have
$-\widehat{N} = \bigcup_{m,n\in \nodes(N)}\widehat{N_{mn}}$ where $N_{mn}$ is a network
with nodes $\set{m,n}$ and labeling $N_{mn}(m,n) = -N(m,n)$. $N_{mn}$
may not belong to $X$ but it is equivalent to a union of at most finitely many
members of $\widehat{X}$.  The diagonal $\diag ij \in\Ca(S)$ is equal to $\widehat{N}$
where $N$ is a network with nodes $\set{i,j}$ and labeling
$N(i,j)=\sf Id$.  Closure under cylindrification is given.
Let $\C$ be the subalgebra of $\Ca(S)$ generated by $\widehat{X}$.
Then $\A = \mathfrak{Ra}(\C)$.
To see why, each element of $\A$ is a union of a finite number of atoms,
possibly a co--finite subset of $\g_0$ and possibly a co--finite subset
of $\r_+$.  Clearly $\A\subseteq\mathfrak{Ra}(\C)$.  Conversely, each element
$z \in \mathfrak{Ra}(\C)$ is a finite union $\bigcup_{N\in F}\widehat{N}$, for some
finite subset $F$ of $X$, satisfying $\cyl i z = z$, for $i > 1$. Let $i_0,
\ldots, i_k$ be an enumeration of all the nodes, other than $0$ and
$1$, that occur as nodes of networks in $F$.  Then, $\cyl
{i_0} \ldots
\cyl {i_k}z = \bigcup_{N\in F} \cyl {i_0} \ldots
\cyl {i_k}\widehat{N} = \bigcup_{N\in F} \widehat{(N\restr{\set{0,1}})} \in \A$.  So $\mathfrak{Ra}(\C)
\subseteq \A$.
$\A$ is relation algebra reduct of $\C\in\CA_\omega$ but has no complete representation.
But in fact $\C$ is in $\QEA_{\omega}$. Let $n>2$. Let $\B=\Nrr_n \C$. Then
$\B\in {\sf Nr}_n\QEA_{\omega}$, is atomic, but even its $\sf Df$ reduct has no complete representation for plainly a complete representation of $\Rd_{df}\B$ induces one of $\B$ hence one for $\A$. 
In fact, because $\B$  is generated by its two dimensional elements,
and its dimension is at least three, its
$\Df$ reduct is not completely representable.
\cite[Proposition 4.10]{Hodkinson}.
It remains to show that the $\omega$--dilation $\C$ is atomless. 
For any $N\in X$, we can add an extra node 
extending
$N$ to $M$ such that $\emptyset\subsetneq M'\subsetneq N'$, so that $N'$ cannot be an atom in $\C$.
\end{proof}
%
\begin{lemma} Let $2<n<\omega$. 
If $\A$ is atomic and $\A\in \Nr_n\QEA_{\omega}$ then $\A\in {\sf LQEA}_n$.
An entirely analogous result holds for relation algebras upon replacing $\Nr_n\CA_{\omega}$ by $\Ra\CA_{\omega}$.
\end{lemma}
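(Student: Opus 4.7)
The plan is to reduce the statement to Lemma \ref{n} applied with dilation dimension $m = \omega$, and then decode the resulting winning strategy for \pe\ in the $\omega$--rounded game into the first order Lyndon conditions. Write $\A = \mathfrak{Nr}_n \C$ with $\C \in \QEA_\omega$. The first step is to establish $\A \subseteq_c \C$. This repeats verbatim the argument given inside the proof of Theorem~\ref{pa} showing $\mathfrak{Nr}_\omega \B \subseteq_c \B$: if $S \subseteq \A$ with $\sum^{\A} S = 1$ were bounded in $\C$ by some $d < 1$, then setting $J = \Delta d \setminus n$ (a finite subset of $\omega$) and $t = -\,\cyl{(J)}(-d)$ yields $\cyl{i}t = t$ for all $i \in \omega \setminus n$, so $t \in \mathfrak{Nr}_n\C = \A$, and the computation $s \leq t < 1$ for every $s \in S$ contradicts $\sum^{\A} S = 1$.

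Second, I would invoke Lemma~\ref{n} with the dilation ordinal equal to $\omega$. The non--Boolean operators of $\C \in \QEA_\omega$ are completely additive: the finite substitutions $\sub i j$ and $\swap i j$ are Boolean endomorphisms, and the single cylindrifiers $\cyl i$ are completely additive in this infinite--dimensional setting (in the same sense in which the analogous additivity is exploited in the proofs of Theorems~\ref{pa} and \ref{paa}). Hence $\C \in \QEA_\omega^{\sf ad}$ and therefore $\A \in \bold S_c\mathfrak{Nr}_n\QEA_\omega^{\sf ad}$. Lemma~\ref{n} then delivers a winning strategy for \pe\ in the $\omega$--rounded game $\bold G^\omega(\At\A)$.

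Third, since $\bold G^\omega$ offers \pe\ countably many nodes, the option of reusing them becomes vacuous, so her strategy restricts to a winning strategy in $G_\omega(\At\A)$, and {\it a fortiori} in $G_k(\At\A)$ for every finite $k$. The Lyndon conditions for $\QEA_n$ are, by definition, precisely the first order sentences encoding the existence of such winning strategies in $G_k$ for each $k < \omega$ (cf.\ the corresponding step in the proof of Theorem~\ref{bsl}). Therefore $\A \in \sf LQEA_n$. For the relation algebra analogue, the same template applies: write $\A = \mathfrak{Ra}\C$ with $\C \in \CA_\omega$, verify $\mathfrak{Ra}\C \subseteq_c \C$ by the standard relativisation argument, and apply the $\sf RA$ version of Lemma~\ref{n} (the statements for relation algebras from \cite[Lemmata~29, 26, 27]{r} on which Lemma~\ref{n} is modelled) to obtain \pe's winning strategies in the corresponding atomic games, concluding $\A \in \sf LRRA$.

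The principal obstacle is the verification of the complete additivity hypothesis $\QEA_\omega^{\sf ad}$ required by Lemma~\ref{n}; once this is confirmed (either as a background assumption on the class or by the presence of the substitution operators), the remainder of the argument is essentially bookkeeping of what Lemma~\ref{n} already gives together with the definition of the Lyndon conditions as a schema of first order sentences.
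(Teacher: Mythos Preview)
Your approach is essentially the same as the paper's: both invoke Lemma~\ref{n} with $m=\omega$ to get \pe's winning strategy in $\bold G^\omega(\At\A)$, observe that with infinitely many nodes available the reuse option is vacuous so this descends to $G_\omega$ and hence to each $G_k$, and then conclude via the definition of the Lyndon conditions. The paper's proof is two lines because $\A=\mathfrak{Nr}_n\C$ already gives $\A\subseteq_c\mathfrak{Nr}_n\C$ trivially, so $\A\in\bold S_c\Nr_n\QEA_\omega$ and Lemma~\ref{n} applies directly.

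Your first step (proving $\A\subseteq_c\C$) is therefore superfluous, and the argument you give for it has a gap: for an arbitrary $\C\in\QEA_\omega$ the dimension set $\Delta d$ of $d\in\C$ need not be finite, so your $J=\Delta d\setminus n$ need not be finite and the term $\cyl{(J)}(-d)$ is unavailable (only single cylindrifiers are in the $\QEA_\omega$ signature). The argument from Theorem~\ref{pa} that you are adapting works because there the dilation is in $\PEA_\beta$, where infinitary cylindrifiers $\cyl{(\Gamma)}$ are primitive operations. If you really wanted $\A\subseteq_c\C$ you could first replace $\C$ by the subalgebra it generated by $\A$, which is locally finite; but as noted, you do not need this for the hypothesis of Lemma~\ref{n}.

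On the complete additivity obstacle you flag: it dissolves in the presence of diagonals. In $\QEA_\omega$ the replacement $\sub i j$ is term--definable as $\cyl i(\diag i j\cdot x)$, and $\cyl i$ is completely additive; the transpositions $\swap i j$ are Boolean endomorphisms by axiom. So $\QEA_\omega=\QEA_\omega^{\sf ad}$ and Lemma~\ref{n} applies without further qualification --- this is exactly what the paper remarks inside the proof of Lemma~\ref{n} (``the stipulated additivity condition when considering only $\CA$s is superfluous since it holds anyway''), and the same reasoning covers $\QEA$.
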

\begin{proof}
Assume that $\A$ is as in the hypothesis. Being in the class $\Nr_n\QEA_{\omega}\subseteq (\bold S_c\Nr_n\QEA_{\omega})$. By 
Lemma \ref{n}, \pe\ has a \ws\ in $\bold G^{\omega}\At\A$. Since infinitely many nodes are used (and reuse), 
hence she has a \ws\ in the usual $\omega$ rounded usual atomic $G_{\omega})\At\A)$ without the need to reuse th nodes in play, {\it a fortori} she has a \ws\ in the $k$ rounded atomic game
$G_k(\At\A)$ for all $k\in \omega$. By definition, $\A\in {\sf LQEA}_n$.
\end{proof}

In the previous construction used in Proposition \ref{bsl}, and the previous Lemma $\A\in \sf Ra\CA_{\omega}$ 
and $\B\in \Nr_n\CA_{\omega}$  satisfy the Lyndon conditions, but are not completely representable. Thus:
\begin{corollary}\cite{HH}\label{HH} Let $2<n<\omega$. Then the classes $\sf CRRA$ and for any variety $\sf V$ between ${\sf Df}_n$ and $\QEA_n$ ${\sf CRV}$ is 
not elementary.
\end{corollary}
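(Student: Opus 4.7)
The strategy is to exhibit, for each variety $\sf V$ between $\Df_n$ and $\QEA_n$, a single atomic algebra $\A_{\sf V}\in \sf V$ that is elementarily equivalent to a completely representable algebra but is itself not completely representable; the analogous exhibit for $\sf RA$ handles $\sf CRRA$. Both witnesses will be harvested directly from the construction in Theorem \ref{bsl}, which produced $\C\in\QEA_\omega$ such that $\mathfrak{Ra}\C$ is an atomic relation algebra with no complete representation, and $\mathfrak{Nr}_n\C$ is an atomic $\QEA_n$ whose $\sf Df$ reduct has no complete representation.

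First I would take $\A=\mathfrak{Ra}\C$ for the $\sf CRRA$ part, and $\B=\Rd_{\sf V}\mathfrak{Nr}_n\C$ for the variety part. For $\B$, the key point is that a complete representation of $\B$ would restrict to a complete representation of its $\sf Df_n$ reduct, since $\sf V$ contains the signature of $\Df_n$; by Theorem \ref{bsl} this is impossible, so $\B\notin \sf CRV$. The same reasoning, using complete representability of $\sf RA$s in place of $\sf Df_n$-algebras, shows $\A\notin \sf CRRA$.

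Next I would verify that $\A$ and $\B$ satisfy all the Lyndon conditions (in the appropriate signatures), hence belong to the elementary closure of the completely representable class. For $\B$, since $\mathfrak{Nr}_n\C\in \Nr_n\QEA_\omega\subseteq \mathbf{S}_c\Nr_n\QEA_\omega^{\sf ad}$, Lemma \ref{n} gives \pe\ a winning strategy in $\bold G^\omega(\At\mathfrak{Nr}_n\C)$ played in the full $\QEA_n$ signature; restricting moves to the (smaller) $\sf V$-signature only reduces \pa's options, so the same strategy wins the $\sf V$-game $G_k(\At\B)$ for every finite $k$. Coding these winning strategies as first order sentences yields that $\B$ satisfies every Lyndon condition for $\sf V$, and since the Lyndon conditions axiomatize the elementary closure of $\sf CRV$ (the standard argument via $\omega$-saturated models and constructing a complete $m$-square representation by Lemma \ref{n}'s second clause), $\B\in \mathbf{El}(\sf CRV)$. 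The argument for $\A\in \mathbf{El}(\sf CRRA)$ is the $\sf RA$ analogue pointed out in the lemma preceding the corollary.

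Putting the two steps together, $\B\in \mathbf{El}(\sf CRV)\setminus \sf CRV$ and $\A\in \mathbf{El}(\sf CRRA)\setminus \sf CRRA$, so neither class is elementary. The main subtlety I expect is bookkeeping across varying signatures: ensuring that a winning strategy in the richest $\QEA_n$-game genuinely transfers to the $\sf V$-game (routine, since \pa\ loses moves), and that non-complete-representability of the $\Df_n$-reduct genuinely blocks complete representability of every intermediate reduct (equally routine, since every extra operation of $\sf V$ already agrees set-theoretically with its $\QEA_n$-interpretation on a representation). No new combinatorics are needed beyond what Theorem \ref{bsl} and Lemma \ref{n} already supply.
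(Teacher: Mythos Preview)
Your proposal is correct and follows essentially the same route as the paper: take the relation algebra $\A=\mathfrak{Ra}\C$ and the $\sf V$--reduct of $\B=\mathfrak{Nr}_n\C$ from Theorem~\ref{bsl}, use Lemma~\ref{n} (and the lemma immediately before the corollary) to place them in the Lyndon class, and invoke the non--complete--representability of the $\Df$ reduct (respectively of $\A$) to conclude non--elementarity.  One minor imprecision: the passage ``Lyndon conditions axiomatize $\mathbf{El}(\sf CRV)$'' is not literally Lemma~\ref{n}'s second clause (which goes from complete $m$--square representations to winning strategies, not conversely); what you actually need is the standard ultrapower/$\omega$--saturation argument that turns \pe's \ws\ in every $G_k$ into a \ws\ in $G_\omega$ on an elementary extension, yielding a complete representation there---but this is indeed routine and the paper treats it as known background (``${\sf LCA}_n=\mathbf{El}\,\CRCA_n$'').
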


\end{document}